\newtheorem{theorem}{Theorem}[section]
\newtheorem{proposition}[theorem]{Proposition}
\newtheorem{lemma}[theorem]{Lemma}
\newtheorem{remark}{Remark}
\numberwithin{equation}{section}
\begin{document}
	\pagenumbering{arabic}
	\bigskip\bigskip
	\noindent{\Large\bf Precise large deviations for the total population of heavy-tailed subcritical branching process with immigration
		\footnote{ This work was supported in part by NSFC (NO. 11971062) and  the National Key Research and Development Program of China (No. 2020YFA0712900).} }
	
	\noindent
	{Jiayan Guo\footnote{ School of Mathematical Sciences \& Laboratory of Mathematics and Complex Systems, Beijing Normal University, Beijing 100875, P.R. China. Email: guojiayan@mail.bnu.edu.cn}
		\quad 
		Wenming Hong\footnote{ School of Mathematical Sciences \& Laboratory of Mathematics and Complex Systems, Beijing Normal	University, Beijing 100875, P.R. China. Email: wmhong@bnu.edu.cn}
	}
	
	\begin{center}
		\begin{minipage}{12cm}
			\begin{center}\textbf{Abstract}\end{center}
			\footnotesize
			
			In this article we focus on the partial sum $S_{n}=X_{1}+\cdots+X_{n}$ of the subcritical branching process with immigration $\{X_{n}\}_{n\in\mathbb{N_{+}}}$, under the condition that one of the offspring $\xi$ or immigration $\eta$ is regularly varying. The tail distribution of $S_n$ is heavily dependent on that of $\xi$ and $\eta$, and a precise large deviation probability for $S_{n}$ is specified. (i)When the tail of offspring $\xi$ is ``lighter" than immigration $\eta$,   uniformly for $x\geq x_{n}$,  $P(S_{n}-ES_{n}>x)\sim c_{1}nP(\eta>x)$ with some constant $c_{1}$ and sequence $\{x_{n}\}$,   where $c_{1}$ is only related to the mean of offspring; (ii) When the tail of immigration $\eta$ is not ``heavier" than offspring $\xi$,    uniformly for $x\geq x_{n}$,   $P(S_{n}-ES_{n}>x)\sim c_{2}nP(\xi>x)$  with some constant $c_{2}$ and sequence $\{x_{n}\}$, where  $c_{2}$ is related to both the mean of offspring and the mean of immigration. 			
			
			\bigskip
			\textbf{Keywords:} subcritical branching process with immigration, total population, large deviation, regularly varying function, stationary distribution. \\	
			\textbf{Mathematics Subject Classification}:  Primary 60J80; Secondary 60F10.
		\end{minipage}
	\end{center}

	\section{Introduction}
	Let $\{X_{n}\}$ be a branching process with immigration which is defined by $X_{0}=0$ and  
	\begin{equation}\label{defGWI1}
		X_{n}=\sum_{i=1}^{X_{n-1}}\xi_{n,i}+\eta_{n}, \quad n\in\mathbb{N_{+}}, 
	\end{equation}
	(with the convention $\sum_{i=1}^{0} =0$), where $\{\xi_{n,i}\}_{n,i\in\mathbb{N_{+}}}$ and $\{\eta_{n}\}_{n\in\mathbb{N_{+}}}$ are two independent i.i.d sequences of nonnegative integer-valued random variables. To exclude trivialities, we always assume that $P(\eta=0)<1$. Use $\xi$, $\eta$ for the generic copies and $\alpha:=E\xi$, $\beta:=E\eta$ for their means, respectively. When $\alpha:=E\xi<1(=1, >1)$, we say the process is subcritical (critical, supercritical). In this paper we consider the subcritical case.
	
	To ease notation, we introduce the i.i.d random operator $\theta_{n}(n\in\mathbb{N_{+}})$ as
	$$\theta_{n}\circ k=\sum_{i=1}^{k}\xi_{n,i}, \quad k\in\mathbb{N},$$ where $\theta_{n}\circ 0=0$. And $\theta_{n}\circ(k_{1}+k_{2})\overset{\text{d}}{=}\theta_{n}^{(1)}\circ k_{1}+\theta_{n}^{(2)}\circ k_{2}$, where $\theta_{n}^{(1)}$ and $\theta_{n}^{(2)}$ on the right-hand side are independent with the same distribution as $\theta_{n}$. Then (\ref{defGWI1}) can be written as
	\begin{equation}\label{defGWI2}
		X_{n}=\theta_{n}\circ X_{n-1}+\eta_{n}, \quad n\in\mathbb{N_{+}}. 
	\end{equation}
	
	The limiting behavior of the subcritical process has attracted much attention in literature. It is shown in Foster and Williamson \cite{Foster} that $\{X_{n}\}_{n\in\mathbb{N}}$ has a stationary distribution $X$ if and only if
	\begin{equation}\label{condition}
		E\log^{+}\eta=\sum_{k=1}^{\infty}P(\eta=k)\log k<\infty.
	\end{equation} 
	Kevei and Wiandt \cite{Kevei} gave a necessary and sufficient condition for the existence of moments of $X$ in multi-type case. As for the tail distribution, Basrak et.al \cite{Basrak} proved that $X$ is also regularly varying  when $\xi$ or $\eta$ is regularly varying , and Foss and Miyazawa \cite{Foss} extended their results onto the more general case.

	We are interested in the large deviation of probabilities $P(S_{n}>x)$ for the partial sum of the process, where, 
	$$S_{n}=X_{1}+\cdots+X_{n}.$$	
	When Cram\'{e}r's condition is satisfied, namely, for some $\theta>0$, $Ee^{\theta\xi}<\infty$ and $Ee^{\theta\eta}<\infty$, Shihang Yu et.al \cite{Yushihang} have provided the exact form of large and moderate deviations for the empirical mean of population ${S_{n}}/{n}$ and centered total population $S_{n}-ES_{n}$, where the rate functions are explicitly identified, by analyzing the relation between $X_{n}$ and $X_{n-1}$  and verifying the conditions of G\"{a}rtner-Ellis theorem.

	In the present paper, we focus on the case when Cram\'{e}r's condition is not satisfied, for example the distribution of $\xi$ or $\eta$ is heavy-tailed, or precisely speaking, regularly varying, the behavior of large deviation probability $P(S_{n}>x)$. We will identify it in what follows, by decomposing $S_{n}$ and using the properties of regularly varying functions and the stationary distribution $X$.\\

	For convenience, we summarize some known results on regularly varying  distribution in Appendix A and the precise large deviation results for i.i.d regularly varying sequence in Appendix B. 	
	Throughout this paper, $f(x)=o(g(x))$ means $\lim_{x\rightarrow\infty}f(x)/g(x)=0$, and $f(x)\sim g(x)$ means $\lim_{x\rightarrow\infty}f(x)/g(x)=1$, for two vanishing (at infinity) functions. When the value of a positive constant is not of interest, we write $c$ for them.\\

	Two kind of models are considered. 
	
	In the first model, we assume that $\eta$ is regularly varying, and the tail of $\xi$ is lighter, i.e.,
	\begin{equation}\label{conditionA1}\tag{A1}
		0<\alpha=E\xi<1,
	\end{equation}
	\begin{equation}\label{conditionA2}\tag{A2}
		P(\eta>x)=x^{-\kappa}L(x), 
	\end{equation}
	for some $\kappa>0$ and a slowly varying function $L(x)$. For $\kappa\geq1$, we also assume that
	\begin{equation}\label{conditionA3}\tag{A3}
		\exists\delta>0,\quad E(\xi^{\kappa+\delta})<\infty.
	\end{equation}
	
	Then as $x\rightarrow\infty$,
	$$P(\sum_{i=1}^{\eta}\xi_{i}>x)\sim (E\xi)^{\kappa}P(\eta>x)$$
	by Lemma \ref{refA2}, which implies that $\xi_{1}+\xi_{2}+\cdots+\xi_{\eta}$ will inherit the regular variation dominated by $\eta$. Note that condition (\ref{condition}) is fulfilled under (\ref{conditionA2}), then as a consequence, there exists a  stationary distribution $X$ for the sequence  $\{X_{n}\}_{n\in\mathbb{N}}$.	We prove that $X$ is regularly varying with index $\kappa$ in this model in Lemma \ref{lemmaA}, i.e.,
	\begin{equation*}
		\lim_{x\rightarrow\infty}\frac{P(X>x)}{P(\eta>x)}=\frac{1}{1-\alpha^{\kappa}},
	\end{equation*}
	using the similar method as Theorem 2.1.1 in Basrak et.al \cite{Basrak} (there the second moment for $\xi$ is needed when $\kappa\in[1,2)$;  but here we improve the conditions as (\ref{conditionA3})). Then by decomposing $S_{n}$ and using the properties of regularly varying functions, we get in Proposition \ref{theoremA0} that for fixed $n$, $S_{n}$ is also regularly varying, i.e., 
	\begin{equation*}
		\lim_{x\rightarrow\infty}\frac{P(S_{n}>x)}{P(\eta>x)}
		=\sum_{i=1}^{n}[(\sum_{m=0}^{i-1}\alpha^{m})^{\kappa}].
	\end{equation*}

	Furthermore we can couple the increase of $x$ with $n$ to obtain probabilities of precise large deviations uniformly for $x\geq x_{n}$, where $\{x_{n}\}$ is some appropriate sequences tend to infinity. We have the following result,

	\begin{theorem}\label{theoremA}		
		Assume (\ref{conditionA1})-(\ref{conditionA3}) are satisfied, then there exits sequence $\{x_{n}\}\uparrow\infty$ that
		\begin{equation}\label{eqtheoremA1}			
			\lim_{n\rightarrow\infty}\sup_{x\geq  x_{n}}\left|\frac{P(S_{n}-d_{n}>x)}{nP(\eta>x)}-\frac{1}{(1-\alpha)^{\kappa}}\right|=0
		\end{equation}
		and
		\begin{equation}\label{eqtheoremA2}
			\lim_{n\rightarrow\infty}\sup_{x\geq x_{n}}\frac{P(S_{n}-d_{n}\leq-x)}{nP(\eta>x)}=0,
		\end{equation}
		where
		\begin{equation*}
			d_{n}=\left\{
			\begin{aligned}
				\begin{array}{cl}
					0, &\kappa\in(0,1]\\
					ES_{n}, &\kappa\in(1,\infty)
				\end{array}
			\end{aligned}
			\right.
		\end{equation*}
		and if $\kappa\in(0,2]$, one can choose $x_{n}=n^{\delta+1/\kappa}$ for any $\delta>0$; if $\kappa\in(2,\infty)$, one can choose $x_{n}=\sqrt{an\log n}$ for $a>\kappa-2$.\\
	\end{theorem}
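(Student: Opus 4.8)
The plan is to decompose $S_n$ according to the immigration cohorts, turning it into a sum of \emph{independent} regularly varying random variables, and then to invoke the precise large deviation machinery for independent sums collected in Appendix B. Iterating \eqref{defGWI2} and using $X_0=0$ gives $X_k=\sum_{j=1}^k\theta_k\circ\cdots\circ\theta_{j+1}\circ\eta_j$, so that
\[
S_n=\sum_{k=1}^nX_k=\sum_{j=1}^nT_{j,n},\qquad T_{j,n}:=\sum_{k=j}^n\theta_k\circ\cdots\circ\theta_{j+1}\circ\eta_j ,
\]
where $T_{j,n}$ is the total number of individuals counted over times $j,\dots,n$ that descend from the $\eta_j$ immigrants arriving at generation $j$. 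By the branching property the cohorts $(T_{j,n})_{1\le j\le n}$ are independent, and conditionally on $\eta_j$ each $T_{j,n}$ is a sum of $\eta_j$ i.i.d. copies of the total progeny $W_{n-j}$ of a single individual over the next $n-j$ generations, whose mean is $c_{j,n}:=\sum_{m=0}^{n-j}\alpha^m=(1-\alpha^{n-j+1})/(1-\alpha)$.

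Because $\alpha<1$, condition \eqref{conditionA3} (and merely $E\xi<\infty$ when $\kappa<1$) guarantees enough moments of $W_{n-j}$, so Lemma \ref{refA2} applies to the randomly stopped sum $T_{j,n}$ and yields $P(T_{j,n}>x)\sim c_{j,n}^{\kappa}P(\eta>x)$ as $x\to\infty$ — the same single-immigration mechanism behind the fixed-$n$ identity of Proposition \ref{theoremA0}. The constants obey $1=c_{n,n}\le c_{j,n}\uparrow(1-\alpha)^{-1}$, so they are uniformly bounded and, since $(1-\alpha^i)/(1-\alpha)\to(1-\alpha)^{-1}$ geometrically,
\[
\frac1n\sum_{j=1}^nc_{j,n}^{\kappa}=\frac1n\sum_{i=1}^n\Big(\frac{1-\alpha^{i}}{1-\alpha}\Big)^{\kappa}\longrightarrow\frac{1}{(1-\alpha)^{\kappa}},
\]
which already pins down the constant appearing in \eqref{eqtheoremA1}.

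The heart of the argument is to show that, uniformly for $x\ge x_n$,
\[
P(S_n-d_n>x)\sim\sum_{j=1}^nP\big(T_{j,n}-ET_{j,n}>x\big)\sim\Big(\sum_{j=1}^nc_{j,n}^{\kappa}\Big)P(\eta>x).
\]
This is the single-big-jump principle: a single anomalously large immigration $\eta_j\approx(1-\alpha)x$, amplified by its total-progeny factor $c_{j,n}\approx(1-\alpha)^{-1}$, suffices to carry $S_n$ above $x$ while all other cohorts stay near their means. I would obtain matching bounds by the classical Nagaev scheme: a Bonferroni lower bound isolating one large cohort, and an upper bound that splits the event according to whether some single summand is large (a union bound controlled by the previous step) or all summands are moderate (a truncation plus Fuk--Nagaev estimate showing this part is $o(nP(\eta>x))$). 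Since the summands are independent, share the common index $\kappa$, and have uniformly bounded tail constants, this is exactly the regime in which the i.i.d. statements of Appendix B extend, and for which the ranges $x_n=n^{1/\kappa+\delta}$ ($\kappa\le2$) and $x_n=\sqrt{an\log n}$ with $a>\kappa-2$ ($\kappa>2$) are the natural ones; indeed the threshold $a>\kappa-2$ is precisely what a Bernstein/Fuk--Nagaev bound on the ``all-moderate'' part requires. This transfer is the main obstacle: one must carry the i.i.d. precise-deviation estimates over to the non-identically distributed cohorts while keeping uniformity on all of $[x_n,\infty)$, the heterogeneity being tamed by the boundedness of the $c_{j,n}$ and by the fact that the $O(\log n)$ recent cohorts whose constants differ appreciably from $(1-\alpha)^{-1}$ are negligible after division by $n$.

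Finally, the centering and the lower tail \eqref{eqtheoremA2} are comparatively routine. For $\kappa\le1$ we take $d_n=0$; then $S_n\ge0$ makes \eqref{eqtheoremA2} trivial, and no centering is needed because $x_n=n^{1/\kappa+\delta}$ far exceeds the typical scale $n^{1/\kappa}$ of the heavy-tailed sum. For $\kappa>1$ the centering is $d_n=ES_n=\sum_jET_{j,n}<\infty$, and \eqref{eqtheoremA2} follows because $S_n-ES_n\le-x$ forces every cohort simultaneously below its mean; as the centered summands are bounded above by their means, a one-sided Bernstein (for $\kappa>2$) or Fuk--Nagaev (for $1<\kappa\le2$) estimate shows this probability decays faster than the polynomial $nP(\eta>x)$ throughout $[x_n,\infty)$.
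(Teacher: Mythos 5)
Your cohort decomposition $S_n=\sum_{j=1}^n T_{j,n}$ is exactly the paper's first decomposition (its $Y_i$ in \eqref{decomSn1}--\eqref{decom1i}), and your identification of the constant, the single-big-jump mechanism, and the cohort tails $P(T_{j,n}>x)\sim c_{j,n}^{\kappa}P(\eta>x)$ (via Lemma \ref{refA2} plus the moment bound of Lemma \ref{lemmaGW1}) all match what the paper proves in Proposition \ref{theoremA0} for fixed $n$. The divergence is in how the \emph{uniform} large deviation is obtained, and there your proposal has a genuine gap: the step you yourself call ``the main obstacle'' --- carrying the i.i.d.\ statements of Appendix B over to the triangular array of independent but non-identically distributed cohorts, uniformly on $[x_n,\infty)$ --- is asserted, not proved, and it is not available off the shelf. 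It is the entire technical content of the theorem. The paper's own Remark 2 points out that the known non-i.i.d.\ result (Paulauskas and Sku\v{c}ait\.{e}) requires $\kappa>1$, so for $\kappa\in(0,1]$ you have nothing to invoke; and even for $\kappa>1$ you would need uniform-in-$(j,n)$ control of the error in $P(T_{j,n}>x)\sim c_{j,n}^{\kappa}P(\eta>x)$ (there are $n$ distinct distributions, changing with $n$), together with a Fuk--Nagaev truncation argument redone for heterogeneous summands. None of that is routine bookkeeping, and your appeal to ``bounded tail constants'' does not substitute for it.

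The paper sidesteps all of this with a different decomposition: it \emph{completes} each cohort to its infinite-horizon progeny, writing $S_n=S_{n,1}-S_{n,2}$ as in \eqref{decomSn2}, where $S_{n,1}=Y_1^{(\infty)}+\cdots+Y_n^{(\infty)}$ is a sum of genuinely i.i.d.\ variables with $Y^{(\infty)}\overset{\text{d}}{=}T^{(1)}+\cdots+T^{(\eta)}$ regularly varying with constant $(1-\alpha)^{-\kappa}$, so Theorem \ref{refB} applies verbatim; the correction $S_{n,2}\overset{\text{d}}{=}T^{(1)}+\cdots+T^{(\theta\circ X_n)}$ converges in distribution to $S^{(\infty)}=T^{(1)}+\cdots+T^{(\theta\circ X)}$, which is a single regularly varying random variable (here Lemma \ref{lemmaA} on the stationary tail enters), whence $\sup_{x\ge x_n}P(S_{n,2}>\varepsilon x)/(nP(\eta>x))=O(1/n)\to0$. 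Note that the natural way to repair the uniformity problem in your plan --- dominate every $T_{j,n}$ by its infinite-horizon completion --- is precisely this argument, so a careful execution of your route essentially collapses into the paper's proof. Two further slips in your lower-tail treatment: the centered cohorts $T_{j,n}-ET_{j,n}$ are bounded \emph{below} by $-ET_{j,n}$, not ``bounded above by their means,'' and the event $S_n-ES_n\le-x$ forces only the \emph{sum}, not every cohort, below its mean; moreover for $1<\kappa\le2$ the cohorts have infinite variance, so a one-sided Bernstein bound is unavailable and the estimate again needs a heterogeneous Fuk--Nagaev argument. In the paper, \eqref{eqtheoremA2} instead falls out of Theorem \ref{refB} (the case $q=0$) applied to $S_{n,1}$, plus the same negligibility of $S_{n,2}$. (Your observation that for $\kappa\le1$ the bound \eqref{eqtheoremA2} is trivial since $d_n=0$ and $S_n\ge0$ is correct.)
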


	In the second model, we assume that $\xi$ is regularly varying, and the tail of $\eta$ is lighter or comparable with $\xi$, i.e.,
	\begin{equation}\label{conditionB1}\tag{B1}
		0<\alpha=E\xi<1,
	\end{equation}
	\begin{equation}\label{conditionB2}\tag{B2}
		P(\xi>x)=x^{-\kappa}L(x),
	\end{equation}
	for some $\kappa>1$ and a slowly varying function $L(x)$. 	
	And one of the following  conditions is satisfied:
	\begin{equation}\label{conditionB3}\tag{B3}
		(i)\,\exists\,\delta>0,\, E(\eta^{\kappa+\delta})<\infty\,;
	\end{equation}
	\begin{equation}\label{conditionB4}\tag{B4}
		(ii)\,\exists\, p>0,\, P(\eta>x)=x^{-\kappa}L_{1}(x)\sim pP(\xi>x)\,,
	\end{equation}
	where $L_{1}(x)$ is also a slowly varying function. 
	
	Since \eqref{conditionB3} implies $P(\eta>x)=o(P(\xi>x))$, we denote $p=0$ in this case. Then by Lemma \ref{refA3} and Lemma \ref{refA4} we have, as $x\rightarrow\infty$,
	$$P(\sum_{i=1}^{\eta}\xi_{i}>x)\sim E\eta P(\xi>x)+ p(E\xi)^{\kappa}P(\xi>x)$$
	for $p\geq0$, which means that $\xi_{1}+\xi_{2}+\cdots+\xi_{\eta}$ will inherit the regular variation, from both $\xi$ and $\eta$. 	
	It is shown in Lemma \ref{lemmaB} and Proposition \ref{theoremB0} that $X$ and $S_{n}$ is regularly varying with index $\kappa$ in this model. We also prove the following large deviation result,
	\begin{theorem}\label{theoremB}
		Assume (\ref{conditionB1})-(\ref{conditionB4}) are satisfied, then there exits sequence $\{x_{n}\}\uparrow\infty$ that
		\begin{equation}\label{eqtheoremB1}
			\lim_{n\rightarrow\infty}\sup_{x\geq 	x_{n}}\left|\frac{P(S_{n}-ES_{n}>x)}{nP(\xi>x)}-\frac{\beta+p(1-\alpha)}{(1-\alpha)^{\kappa+1}}\right|=0
		\end{equation}
		and
		\begin{equation}\label{eqtheoremB2}
			\lim_{n\rightarrow\infty}\sup_{x\geq x_{n}}\frac{P(S_{n}-ES_{n}\leq-x)}{nP(\xi>x)}=0,
		\end{equation}		
		and if $\kappa\in(1,2]$, one can choose $x_{n}=n^{\delta+1/\kappa}$ for any $\delta>0$; if $\kappa\in(2,\infty)$, one can choose $x_{n}=\sqrt{an\log n}$ for $a>\kappa-2$.
	\end{theorem}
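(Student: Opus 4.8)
My plan is to realise $S_{n}$ as a sum of independent family contributions and then sandwich it between i.i.d.\ sums to which the precise large deviation result of Appendix B applies. Each immigration $\eta_{j}$ at time $j$ founds a branching family evolving independently of the others; writing $W_{j}^{(n)}$ for the total number of its members counted over times $j,j+1,\dots,n$, one has $S_{n}=\sum_{j=1}^{n}W_{j}^{(n)}$ with the $W_{j}^{(n)}$ independent (but not identically distributed, since $W_{j}^{(n)}$ depends on the remaining horizon $n-j$). Letting the horizon recede gives the full family total $\tilde{W}_{j}=\sum_{i=1}^{\eta_{j}}V_{j,i}$, where $V$ is the total progeny (founder included) of a single individual and satisfies the distributional fixed point $V=1+\sum_{i=1}^{\xi}V_{i}$. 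Applying the random-sum asymptotics of Lemmas \ref{refA3}--\ref{refA4} to this fixed point (with $V$ regularly varying of index $\kappa$, established by the same method that yields Lemma \ref{lemmaB}) and solving the induced linear relation for the tail constant gives $P(V>x)\sim(1-\alpha)^{-\kappa-1}P(\xi>x)$ with $EV=(1-\alpha)^{-1}$; applying the same lemmas to $\tilde{W}=\sum_{i=1}^{\eta}V_{i}$ (with $p=0$ under \eqref{conditionB3}) then yields $P(\tilde{W}>x)\sim\frac{\beta+p(1-\alpha)}{(1-\alpha)^{\kappa+1}}P(\xi>x)=:c_{2}P(\xi>x)$, exactly the constant in \eqref{eqtheoremB1}. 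I would also record $E\tilde{W}=\beta/(1-\alpha)$ and $ES_{n}=nE\tilde{W}-O(1)$.

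For the upper bound, coupling on a common family tree gives $W_{j}^{(n)}\le\tilde{W}_{j}$, hence $S_{n}\le\tilde{S}_{n}:=\sum_{j=1}^{n}\tilde{W}_{j}$, an i.i.d.\ sum with $E\tilde{S}_{n}=ES_{n}+ER$, where $R:=\tilde{S}_{n}-S_{n}\ge0$ and $ER$ is bounded uniformly in $n$. Writing the event in centred form and invoking the i.i.d.\ result of Appendix B applied to $\tilde{W}$, one gets $P(S_{n}-ES_{n}>x)\le P(\tilde{S}_{n}-E\tilde{S}_{n}>x-ER)\sim nP(\tilde{W}>x)\sim c_{2}nP(\xi>x)$, uniformly for $x\ge x_{n}$, using Potter bounds to absorb the $O(1)$ shift.

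For the lower bound I would avoid any independence assumption between $\tilde{S}_{n}$ and $R$ by using $P(A\cap B^{c})\ge P(A)-P(B)$. Fix $\epsilon>0$ and set $A=\{\tilde{S}_{n}-E\tilde{S}_{n}>(1+\epsilon)x\}$, $B^{c}=\{R-ER\le\epsilon x\}$; then $A\cap B^{c}\subseteq\{S_{n}-ES_{n}>x\}$, so $P(S_{n}-ES_{n}>x)\ge P(\tilde{S}_{n}-E\tilde{S}_{n}>(1+\epsilon)x)-P(R>\epsilon x)$. The first term is $(1+\epsilon)^{-\kappa}c_{2}nP(\xi>x)(1+o(1))$ by Appendix B. The decisive point is the remainder: $R$ is precisely the pure-branching continuation after time $n$ of the $X_{n}$ individuals alive at time $n$, so $R=\sum_{i=1}^{X_{n}}(V_{i}-1)$ with the $V_{i}$ independent of $X_{n}$; by the monotone coupling $X_{n}\preceq X$ one has $R\preceq\sum_{i=1}^{X}(V_{i}-1)$, a single $n$-free regularly varying random sum whose tail is $\le CP(\xi>t)$ by Lemmas \ref{refA3}--\ref{refA4} and Lemma \ref{lemmaB}. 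Hence $P(R>\epsilon x)\le CP(\xi>\epsilon x)\sim C\epsilon^{-\kappa}P(\xi>x)=o\big(nP(\xi>x)\big)$ uniformly for $x\ge x_{n}$, and letting $\epsilon\downarrow0$ matches the upper bound, proving \eqref{eqtheoremB1}. The same remainder bound, combined with the negligible left tail of $\tilde{S}_{n}$ from Appendix B, settles \eqref{eqtheoremB2} after splitting $ES_{n}-S_{n}=(E\tilde{S}_{n}-\tilde{S}_{n})+(R-ER)$.

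The constant $x_{n}$ is inherited verbatim from Appendix B applied to $\tilde{W}$, once the moment hypotheses are checked: $E\tilde{W}<\infty$ always, and $E\tilde{W}^{2}<\infty$ when $\kappa>2$ (from $EV^{2}<\infty$, valid since $\kappa>2$, together with $E\eta^{2}<\infty$ under either \eqref{conditionB3} or \eqref{conditionB4}), which forces $x_{n}=n^{\delta+1/\kappa}$ for $\kappa\in(1,2]$ and $x_{n}=\sqrt{an\log n}$ with $a>\kappa-2$ for $\kappa>2$. The main obstacle I anticipate is keeping every asymptotic equivalence uniform in $x$ while $x_{n}\to\infty$ with $n$ — in particular the uniform-in-$n$ control of the heavy-tailed remainder $R$, which is exactly what forces the representation $R=\sum_{i=1}^{X_{n}}(V_{i}-1)$ and its stochastic domination by the $n$-free random sum; because the two limits $x\to\infty$ and $n\to\infty$ are coupled through $x\ge x_{n}$, the uniform convergence theorem and Potter bounds for regularly varying functions must be invoked carefully at each replacement.
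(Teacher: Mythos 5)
Your proposal is correct and is essentially the paper's own proof: your decomposition $S_{n}=\tilde{S}_{n}-R$ is exactly the paper's $S_{n}=S_{n,1}-S_{n,2}$ in \eqref{decomSn2}, your $V$, $\tilde{W}$, and dominating variable $\sum_{i=1}^{X}(V_{i}-1)$ are the paper's $T$, $Y^{(\infty)}$, and $S^{(\infty)}$, and both arguments apply the i.i.d.\ Nagaev-type result (Theorem \ref{refB}) to the completed sum and show the remainder is uniformly $o\bigl(nP(\xi>x)\bigr)$ via an $n$-free regularly varying bound built from the stationary distribution (Propositions \ref{prop1} and \ref{prop2}). The only differences are cosmetic: you control $R$ by the monotone domination $X_{n}\preceq X$ where the paper uses the weak convergence $S_{n,2}\Rightarrow S^{(\infty)}$, and your upper bound absorbs the bounded shift $ER$ where the paper uses its $\varepsilon$-split on both sides.
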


	\begin{remark}	
		\rm{	
			For the  summation of i.i.d random variable with regularly varying distribution, the precise large deviations have been considered by many authors, see for example, Heyde\cite{Heyde}, Nagaev.A.V\cite{AV}, Nagaev.S.V\cite{SV}, Cline and Hsing \cite{Cline}, which we summarize in  Theorem \ref{refB}.
			
			The situation is different for the partial sums $S_{n}=X_{1}+\cdots+X_{n}$ of the branching processes with immigration because of the dependent structure of the sequence $\{X_n\}$, which reflects on the rate constant respectively. (i) When the tail of offspring $\xi$ is ``lighter" than the immigration $\eta$  (the first model), $S_{n}$ is regularly varying as well with the same index of $\eta$, and  with some constant $c_{1}$ and sequence $x_{n}$, uniformly for $x\geq x_{n}$,  $P(S_{n}-d_{n}>x)\sim c_{1}nP(\eta>x)$, where $c_{1}$ is only related to the mean of the offspring. (ii) When the tail of the immigration $\eta$ is not ``heavier" than the offspring $\xi$ (the second model), $S_{n}$ is regularly varying as well with the same index of $\xi$, and with some constant $c_{2}$ and sequence $x_{n}$, uniformly for $x\geq x_{n}$, $P(S_{n}-ES_{n}>x)\sim c_{2}nP(\xi>x)$ where $c_{2}$ is related to both the mean of the offspring and the mean of the immigration. 		
	}
	\end{remark}
	
	\begin{remark}
		\rm{
			For the summation of independent but not identically regularly varying distributed random variables, we refer to Paulauskas and Sku\v{c}ait\.{e} \cite{Skuchaite}, who proved a large deviation result under the condition that the average of distribution functions of these random variables is equivalent to some regularly varying limit distribution function with index $\kappa>1$. In our proof, although $S_{n}$ can be divided into $n$ independent but not identically distributed random variables, but in one hand note that it may appear that $\kappa\in(0,1)$ in our first model, which is not contained in \cite{Skuchaite}, and on the other hand the method is different: we will prove our results by using the branching properties and the limiting behavior of the process.
		}
	\end{remark}
	
	\begin{remark}	
		\rm{	
			For the total population of branching process with immigration in random environment, we refer to Buraczewski and Dyszewski \cite{Buraczewski18}. They established precise large deviations in the nearest neighbour random walk in random environment, which can be seen as the subcritical branching process with single immigration in random environment. However, it is required in \cite{Buraczewski18} that $E\log A<0$ and there exists $\kappa>0$, s.t. $EA^{\kappa}=1$, where $A$ is the quenched mean of the offspring, and this cannot be degenerated  to our model. }
	\end{remark}

	\begin{remark}	
		\rm{	
			For the  solutions to stochastic recurrence equations $Y_{n}=A_{n}Y_{n-1}+B_{n}\, (n\in \mathbb{Z})$, which can be understood as the quenched mean of branching process with immigration in random environment, it is shown in Kesten \cite{Kesten} and Goldie \cite{Goldie} that if $E\log A<0$, $E\log^{+} B<\infty$ then the equation has a unique and strictly ergodic solution $(Y_{i})$. For the stationary sequence $(Y_{i})$, when Kesten's condition is satisfied, Buraczewski et.al \cite{Buraczewski} proved precise large deviations for partial sum of the stationary sequence $Y_{1}+\cdots+Y_{n}$; when Kesten's conditions are not satisfied and $B$ is regularly varying, precise large deviations were given by Konstantinides and Mikosch\cite{Dimitrios}. Although (\ref{defGWI2}) is somewhat similar in form to the stochastic recurrence equation, it is actually convolution rather than multiplication.}
	\end{remark}

	The article is organized as follow. In Section 2 we analyze the moments and regular variation of underlying branching process without immigration. In Section 3 we study the tail behavior of stationary distribution, which is also regularly varying. In Section 4 we give the regular variation of $S_{n}$ and prove the main results Theorem \ref{theoremA} and Theorem \ref{theoremB}, for large deviations of the partial sum.  Some basic facts needed in the proof are listed in appendix.

	\section{Moments and regular variation of underlying process}
	Let $\{Z_{n}\}$ be the underlying subcritical branching process (without immigration), which is defined by $Z_{0}=1$ and the same offspring distribution as $\{X_{n}\}$, i.e.,
	\begin{equation*}
		Z_{n}=\sum_{i=1}^{Z_{n-1}}\xi_{n,i}, \quad n\in\mathbb{N_{+}}.
	\end{equation*}
	
	Let
	\begin{equation}\label{defTn}
		T_{n}:=1+Z_{1}+\cdots+Z_{n}
	\end{equation}
	be the total population of $\{Z_{n}\}$ up to the $n$th generation, and
	\begin{equation}\label{defT}
		T:=1+Z_{1}+\cdots+Z_{n}+\cdots
	\end{equation}
	be the total population of $\{Z_{n}\}$.
	
	Since $EZ_{n}=\alpha^{n}$, we have $$ET_{n}=1+\alpha+\alpha^{2}+\cdots+\alpha^{n}=\frac{1-\alpha^{n}}{1-\alpha}<\infty.$$
	
	Using the branching property,
	$$T\overset{d}{=}1+\sum_{i=1}^{\xi}T^{(i)},$$
	where $\{T^{(i)}\}_{i}$ is i.i.d and have the same distribution as $T$,
	then
	\begin{equation*}
		ET=\frac{1}{1-\alpha}<\infty.
	\end{equation*}
	
	For higher moments of $T_{n}$ and $T$, we have the following lemma.
	\begin{lemma}\label{lemmaGW1}
		If $\alpha<1$ and $E(\xi^{h})<\infty$ for some $h>1$, then $\forall n\in\mathbb{N_{+}}$, $E(T_{n}^{h})\leq E(T^{h})<\infty$.
	\end{lemma}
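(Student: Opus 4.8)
The plan is to separate the statement into its two assertions. The inequality $E(T_{n}^{h})\le E(T^{h})$ is immediate: from the definitions $T_{n}=1+Z_{1}+\cdots+Z_{n}$ and $T=1+Z_{1}+\cdots$ with every $Z_{k}\ge 0$ we have $0\le T_{n}\le T$ pointwise, and since $x\mapsto x^{h}$ is nondecreasing on $[0,\infty)$ this gives $T_{n}^{h}\le T^{h}$ and hence $E(T_{n}^{h})\le E(T^{h})$. So the whole content is the finiteness $E(T^{h})<\infty$.

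For that, rather than attacking the fixed-point recursion $T\overset{d}{=}1+\sum_{i=1}^{\xi}T^{(i)}$ directly, I would exploit the geometric decay of the generation means. Since $h\ge 1$, Minkowski's inequality applied to the partial sums $T_{N}=\sum_{n=0}^{N}Z_{n}$, together with the monotone convergence $T_{N}\uparrow T$, yields
$$\|T\|_{h}=\Big\|\sum_{n=0}^{\infty}Z_{n}\Big\|_{h}\le\sum_{n=0}^{\infty}\|Z_{n}\|_{h}.$$
It therefore suffices to prove a bound of the form $\|Z_{n}\|_{h}\le K\alpha^{n/h}$ for some constant $K$ and all $n$, because then $\alpha^{1/h}<1$ makes the resulting geometric series converge. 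To bound $\|Z_{n}\|_{h}$ I would use the one-generation recursion $Z_{n}=\sum_{i=1}^{Z_{n-1}}\xi_{n,i}$ together with centering: writing $Z_{n}-\alpha Z_{n-1}=\sum_{i=1}^{Z_{n-1}}(\xi_{n,i}-\alpha)$ and applying the triangle inequality gives $\|Z_{n}\|_{h}\le \alpha\,\|Z_{n-1}\|_{h}+\big\|\sum_{i=1}^{Z_{n-1}}(\xi_{n,i}-\alpha)\big\|_{h}$. For the centered random sum I would condition on $Z_{n-1}$ and invoke a moment inequality for sums of independent centered variables; for $h\in(1,2]$ the von Bahr--Esseen inequality, after taking expectations and using $EZ_{n-1}=\alpha^{n-1}$, yields $\big\|\sum_{i=1}^{Z_{n-1}}(\xi_{n,i}-\alpha)\big\|_{h}\le D\,\alpha^{(n-1)/h}$ with $D=(2E|\xi-\alpha|^{h})^{1/h}<\infty$. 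The recursion $\|Z_{n}\|_{h}\le\alpha\|Z_{n-1}\|_{h}+D\alpha^{(n-1)/h}$ then propagates the ansatz $\|Z_{n}\|_{h}\le K\alpha^{n/h}$ by induction, the crucial point being $\alpha^{1/h}>\alpha$ (valid since $0<\alpha<1$ and $h>1$), which lets one absorb the inhomogeneous term with $K=\max\{1,\,D/(\alpha^{1/h}-\alpha)\}$.

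The step I expect to be the main obstacle is exactly this contraction. Applying a moment inequality directly to $T$ fails, because the relevant mean there is $ET=1/(1-\alpha)$ and the unavoidable constant (the factor $2$ in von Bahr--Esseen) destroys contractivity once $\alpha\ge 1/2$. Working generation by generation is what rescues the argument: the mean $EZ_{n-1}=\alpha^{n-1}$ decays geometrically and multiplies the bad constant away, so that the linear recursion governing $\|Z_{n}\|_{h}$ has the contractive multiplier $\alpha$ rather than something close to $2$.

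The remaining difficulty is the range $h>2$, where von Bahr--Esseen no longer applies. There I would replace it by Rosenthal's inequality, which produces an additional term controlled by $EZ_{n-1}^{h/2}$, and bootstrap on the moment order: the exponent $h/2<h$ having already been treated gives $EZ_{n-1}^{h/2}\le C\alpha^{n-1}$, so both contributions stay $O(\alpha^{n-1})$ and the centered sum again satisfies $\big\|\sum_{i=1}^{Z_{n-1}}(\xi_{n,i}-\alpha)\big\|_{h}\le D\,\alpha^{(n-1)/h}$. Hence the same geometric estimate $\|Z_{n}\|_{h}\le K\alpha^{n/h}$ persists for all $h>1$, and summing over $n$ yields $E(T^{h})=\|T\|_{h}^{h}<\infty$, completing the proof.
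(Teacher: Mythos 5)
Your proposal is correct, and it relates to the paper's proof in an interesting way: the two arguments share the same skeleton, namely geometric decay of the generation moments $E(Z_n^h)$ followed by summation of $L^h$ norms via Minkowski's inequality (the paper performs exactly the same Minkowski step, and, like you, regards $E(T_n^h)\le E(T^h)$ as immediate from $T_n\le T$), but where you \emph{prove} the decay, the paper \emph{cites} it. Specifically, the paper invokes inequality (11) of Kevei and Wiandt \cite{Kevei}, which provides constants $c>0$ and $0<v<1$ with $E(\theta_1\circ\theta_2\circ\cdots\circ\theta_k\circ\eta)^h\le cv^k$ whenever $E\eta^h<\infty$, and then specializes to $\eta\equiv 1$ to obtain $E(Z_k^h)\le cv^k$. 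You instead derive the bound $\|Z_n\|_h\le K\alpha^{n/h}$ self-containedly: centering the one-step branching recursion, applying von Bahr--Esseen for $h\in(1,2]$ and Rosenthal with a bootstrap on the moment order for $h>2$, and closing the induction using $\alpha^{1/h}>\alpha$. Your diagnosis of why this works --- the geometric decay of $EZ_{n-1}=\alpha^{n-1}$ is what absorbs the absolute constants of the moment inequalities, so that contractivity is not lost when $\alpha\ge 1/2$, whereas a direct attack on the fixed-point equation for $T$ would be --- is exactly the right point, and it is essentially the mechanism underlying the cited Kevei--Wiandt estimate in the multitype setting. What the paper's route buys is brevity and reliance on a result already stated for random initial populations; what your route buys is a self-contained argument with the explicit rate $v=\alpha$ and explicit constants, at the cost of invoking standard moment inequalities not quoted in the paper and a somewhat more involved induction when $h>2$.
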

	\begin{proof}
		Actually this is obtained by \cite{Kevei} in the proof of moments of the stationary distribution of subcritical multi-type branching process with immigration. If additionally $E\eta^{h}<\infty$, then it is shown in (11) of \cite{Kevei} that there exists constants $0<v<1$ and $c>0$,
		$$E(\theta_{1}\circ\theta_{2}\circ\cdots\circ\theta_{k}\circ\eta)^{h}\leq cv^{k},\, \forall k\in\mathbb{N}.$$
		
		Take $\eta\equiv1$, then we have $E(Z_{k}^{h})\leq cv^{k}$, and the result follows by Minkowski's inequality,
		$$
		[E(T^{h})]^{\frac{1}{h}}
		=[E(\sum_{n=0}^{\infty}Z_{n})^{h}]^{\frac{1}{h}}
		\leq \sum_{n=0}^{\infty}[E(Z_{n}^{h})]^{\frac{1}{h}}
		\leq \sum_{n=0}^{\infty}(cv^{n})^{\frac{1}{h}}<\infty.
		$$
	\end{proof}
	
	The next three lemmas imply that, in the second model, when $\xi$ is regularly varying, $Z_{n}$ is also regularly varying, as well as $T_{n}$ and $T$.
	\begin{lemma}\label{lemmaGW2}
		If $\alpha<1$ and $\xi$ is regularly varying with $\kappa>1$, then $\forall n\in\mathbb{N_{+}}$, as $x\rightarrow\infty,$
		\begin{align*}
			P(Z_{n}>x)\sim\frac{\alpha^{n}-\alpha^{\kappa n}}{\alpha-\alpha^{\kappa}}P(\xi>x).
		\end{align*}
	\end{lemma}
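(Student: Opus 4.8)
The plan is to argue by induction on $n$, exploiting the branching recursion $Z_{n}\overset{d}{=}\theta_{n}\circ Z_{n-1}=\sum_{i=1}^{Z_{n-1}}\xi_{n,i}$, which realizes $Z_{n}$ as a random sum of i.i.d.\ copies of $\xi$ indexed by the independent count $Z_{n-1}$. For the base case $n=1$, since $Z_{0}=1$ we have $Z_{1}=\xi_{1,1}\overset{d}{=}\xi$, so $P(Z_{1}>x)=P(\xi>x)$; this agrees with the claimed constant because $\tfrac{\alpha-\alpha^{\kappa}}{\alpha-\alpha^{\kappa}}=1$.

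For the inductive step, suppose $P(Z_{n-1}>x)\sim c_{n-1}P(\xi>x)$ with $c_{n-1}=\tfrac{\alpha^{n-1}-\alpha^{\kappa(n-1)}}{\alpha-\alpha^{\kappa}}$. Then $Z_{n-1}$ is regularly varying of index $\kappa$ with tail ratio $c_{n-1}$ relative to $\xi$, and it has finite mean $EZ_{n-1}=\alpha^{n-1}$ (finite precisely because $\kappa>1$). I would apply the random-sum tail asymptotics of Lemma~\ref{refA3} and Lemma~\ref{refA4}, taking the summand to be $\xi$ and the independent count $Z_{n-1}$ in the role of $\eta$ with tail ratio $p=c_{n-1}$. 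This produces two contributions: a ``single big jump'' term $EZ_{n-1}\cdot P(\xi>x)=\alpha^{n-1}P(\xi>x)$ coming from one large offspring, and a term $c_{n-1}(E\xi)^{\kappa}P(\xi>x)=c_{n-1}\alpha^{\kappa}P(\xi>x)$ coming from a large number of individuals each contributing near its mean, so that
\begin{equation*}
	P(Z_{n}>x)\sim\bigl(\alpha^{n-1}+c_{n-1}\alpha^{\kappa}\bigr)P(\xi>x).
\end{equation*}

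Finally I would check that the recursion $c_{n}=\alpha^{n-1}+\alpha^{\kappa}c_{n-1}$ with $c_{1}=1$ telescopes to the stated closed form $c_{n}=\tfrac{\alpha^{n}-\alpha^{\kappa n}}{\alpha-\alpha^{\kappa}}$, which is a one-line algebraic verification. The main obstacle is the inductive step: one must confirm that the hypotheses of Lemma~\ref{refA3}–\ref{refA4} hold at each stage, namely that the random index $Z_{n-1}$ is itself regularly varying of the \emph{same} index $\kappa$ (supplied exactly by the induction hypothesis) and has finite mean, so that both the contribution from a single heavy summand and the contribution from a heavy index are captured simultaneously; once this is set up, the remaining steps are routine.
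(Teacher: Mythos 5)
Your proposal is correct and matches the paper's own proof essentially verbatim: both argue by induction on $n$, with the trivial base case $Z_{1}\overset{d}{=}\xi$, apply the random-sum asymptotics of Lemma~\ref{refA4} (with the count $Z_{n-1}$ in the role of $\eta$, its tail equivalence $P(Z_{n-1}>x)\sim c_{n-1}P(\xi>x)$ supplied by the induction hypothesis) to get $P(Z_{n}>x)\sim(\alpha^{n-1}+\alpha^{\kappa}c_{n-1})P(\xi>x)$, and then telescope the recursion to the closed form. The only cosmetic difference is that the paper invokes Lemma~\ref{refA4} alone, whereas you also cite Lemma~\ref{refA3}, which is not needed once the regularly varying index condition (iii) of Lemma~\ref{refA4} is in force.
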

	\begin{proof}
		We will prove it by induction. Obviously it is true for $n=1$. If it is true for some $n\geq 1$, then for $n+1$, 
		\begin{align*}
			P(Z_{n+1}>x)
			&=P(\sum_{i=1}^{Z_{n}}\xi_{n+1,i}>x)\\
			&\sim EZ_{n} P(\xi>x)+ P(Z_{n}>\frac{x}{\alpha})\\
			&=\alpha^{n}P(\xi>x)
			+\alpha^{\kappa}\frac{\alpha^{n}-\alpha^{\kappa n}}{\alpha-\alpha^{\kappa}}P(\xi>x)\\
			&=\frac{\alpha^{n+1}-\alpha^{\kappa (n+1)}}{\alpha-\alpha^{\kappa}}P(\xi>x),
		\end{align*}
		the second step is by Lemma \ref{refA4}, and notice that $EZ_{n}=\alpha^{n}$.
	\end{proof}

	\begin{lemma}\label{lemmaGW3}
		If $\alpha<1$ and $\xi$ is regularly varying with $\kappa>1$, then $\forall n\in\mathbb{N_{+}}$, as $x\rightarrow\infty,$
		\begin{align*}
			P(T_{n}>x)\sim\sum_{i=0}^{n-1}\alpha^{i}(\frac{1-\alpha^{n-i}}{1-\alpha})^{\kappa}P(\xi>x).
		\end{align*}
	\end{lemma}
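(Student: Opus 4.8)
The plan is to mirror the inductive argument of Lemma \ref{lemmaGW2}, but now carried out for the total population rather than for a single generation. The key structural input is the first-generation branching decomposition: the root contributes the initial individual, it has $Z_1=\xi$ children, and each child independently generates a copy of the process, so that
$$T_n \overset{d}{=} 1 + \sum_{j=1}^{\xi} T_{n-1}^{(j)},$$
where $\{T_{n-1}^{(j)}\}_{j}$ are i.i.d.\ copies of $T_{n-1}$ and are independent of $\xi$. This reduces the tail of $T_n$ to that of a random sum whose number of summands $\xi$ and whose summands $T_{n-1}$ are both regularly varying of index $\kappa$, which is exactly the situation covered by Lemma \ref{refA4}.

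First I would settle the base case $n=1$: since $T_1 = 1+\xi$ and $\xi$ is regularly varying, $P(T_1>x)=P(\xi>x-1)\sim P(\xi>x)$, which matches the claimed formula (the single term $i=0$ equals $1$). For the inductive step, assume $P(T_{n-1}>x)\sim c_{n-1}P(\xi>x)$ with $c_{n-1}=\sum_{i=0}^{n-2}\alpha^{i}\left(\frac{1-\alpha^{\,n-1-i}}{1-\alpha}\right)^{\kappa}$; in particular $T_{n-1}$ is then regularly varying of index $\kappa$. Applying Lemma \ref{refA4} to $\sum_{j=1}^{\xi}T_{n-1}^{(j)}$, with counting variable $\xi$ and summand $T_{n-1}$ (both of index $\kappa$, with finite means since $\kappa>1$), gives
$$P\left(\sum_{j=1}^{\xi}T_{n-1}^{(j)}>x\right)\sim E\xi\cdot P(T_{n-1}>x)+P\left(\xi>\frac{x}{ET_{n-1}}\right).$$

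Using $E\xi=\alpha$, the induction hypothesis, $ET_{n-1}=\sum_{i=0}^{n-1}\alpha^{i}=\frac{1-\alpha^{n}}{1-\alpha}$, and regular variation to evaluate $P(\xi>x/ET_{n-1})\sim (ET_{n-1})^{\kappa}P(\xi>x)$, and finally discarding the additive constant $1$ (harmless, since the random sum is regularly varying so $P(\,\cdot\,>x-1)\sim P(\,\cdot\,>x)$), I obtain $P(T_n>x)\sim c_n P(\xi>x)$ with the recursion $c_n=\alpha c_{n-1}+\left(\frac{1-\alpha^{n}}{1-\alpha}\right)^{\kappa}$. It then remains to verify the algebraic identity: substituting the inductive form of $c_{n-1}$ and shifting the summation index shows $\alpha c_{n-1}=\sum_{j=1}^{n-1}\alpha^{j}\left(\frac{1-\alpha^{\,n-j}}{1-\alpha}\right)^{\kappa}$, and adding the $j=0$ term $\left(\frac{1-\alpha^{n}}{1-\alpha}\right)^{\kappa}$ recovers exactly $c_n=\sum_{i=0}^{n-1}\alpha^{i}\left(\frac{1-\alpha^{\,n-i}}{1-\alpha}\right)^{\kappa}$, closing the induction.

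The one step requiring real care is the application of Lemma \ref{refA4}: one must confirm that the summands $T_{n-1}^{(j)}$ are genuinely i.i.d.\ and independent of $\xi$ (guaranteed by the branching property) and that the two competing tail contributions are correctly identified — a single large subtree of size of order $x$, producing the term $E\xi\cdot P(T_{n-1}>x)$, versus an atypically large number $\xi$ of subtrees of typical size $ET_{n-1}$, producing the term $P(\xi>x/ET_{n-1})$ — in precise analogy with the $n\mapsto n+1$ passage of Lemma \ref{lemmaGW2}. Once the bookkeeping of indices and the value of $ET_{n-1}$ are pinned down, the remainder is routine.
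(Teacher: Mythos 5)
Your proposal is correct and follows essentially the same route as the paper's proof: induction on $n$ via the first-generation branching decomposition $T_n \overset{d}{=} 1+\sum_{j=1}^{\xi}T_{n-1}^{(j)}$, applying Lemma \ref{refA4} to the random sum (with the comparability condition (iii) of that lemma supplied by the induction hypothesis), handling the additive constant $1$ by long-tailedness, and closing the induction with the same index-shift algebra. The only difference is cosmetic — you step from $n-1$ to $n$ rather than from $n$ to $n+1$.
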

	\begin{proof} Again, we will prove it by induction. Obviously the lemma is true for $n=1$, where
		$$P(T_{1}>x)=P(\xi>x-1)\sim P(\xi>x).$$
		
		If it is true for some $n\geq 1$, then for $n+1$, using the branching property with $\{T_{n}^{(i)}\}_{i}$ independent and having the same distribution as $T_{n}$, we have
		\begin{align*}
			P(T_{n+1}>x)
			&=P(1+\sum_{i=1}^{\xi}T_{n}^{(i)}>x)\\
			&=P(\sum_{i=1}^{\xi}T_{n}^{(i)}>x-1)\\
			&\sim E\xi P(T_{n}>x-1)+P(\xi>\frac{x-1}{ET_{n}})\\
			&\sim\alpha\sum_{i=0}^{n-1}\alpha^{i}(\frac{1-\alpha^{n-i}}{1-\alpha})^{\kappa}P(\xi>x-1)
			+(\frac{1-\alpha^{n+1}}{1-\alpha})^{\kappa}P(\xi>x-1)\\
			&\sim\left[\alpha\sum_{i=0}^{n-1}\alpha^{i}(\frac{1-\alpha^{n-i}}{1-\alpha})^{\kappa}
			+(\frac{1-\alpha^{n+1}}{1-\alpha})^{\kappa}\right]P(\xi>x)\\
			&=\sum_{i=0}^{n}\alpha^{i}(\frac{1-\alpha^{n+1-i}}{1-\alpha})^{\kappa}P(\xi>x)
		\end{align*}
		by Lemma \ref{refA4} and $ET_{n}=1+\alpha+\alpha^{2}+\cdots+\alpha^{n}$.
	\end{proof}
	
	\begin{lemma}\label{lemmaGW4}
		If $\alpha<1$ and $\xi$ is regularly varying with $\kappa>1$, then as $x\rightarrow\infty,$
		\begin{align*}
			P(T>x)\sim\frac{1}{(1-\alpha)^{\kappa+1}}P(\xi>x).
		\end{align*}
	\end{lemma}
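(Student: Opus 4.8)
The plan is to combine the branching identity $T\overset{d}{=}1+\sum_{i=1}^{\xi}T^{(i)}$, where the $T^{(i)}$ are i.i.d.\ copies of $T$ independent of $\xi$, with the monotone approximation $T_n\uparrow T$ and Lemma~\ref{lemmaGW3}. The lower bound is immediate: since $T\ge T_n$ we have $P(T>x)\ge P(T_n>x)$, so by Lemma~\ref{lemmaGW3},
\[ \liminf_{x\to\infty}\frac{P(T>x)}{P(\xi>x)}\ \ge\ \sum_{i=0}^{n-1}\alpha^{i}\Big(\frac{1-\alpha^{n-i}}{1-\alpha}\Big)^{\kappa}\qquad\text{for every }n. \]
Letting $n\to\infty$ and invoking dominated convergence (each term is bounded by the summable $\alpha^{i}(1-\alpha)^{-\kappa}$), the right-hand side increases to $(1-\alpha)^{-\kappa}\sum_{i\ge0}\alpha^{i}=(1-\alpha)^{-\kappa-1}$, which already identifies the conjectured constant and establishes $\liminf_{x}P(T>x)/P(\xi>x)\ge(1-\alpha)^{-\kappa-1}$.

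Granting for the moment that $T$ is regularly varying of index $\kappa$, the matching upper bound follows cleanly from the fixed-point identity. Since $E\xi=\alpha$ and $ET=1/(1-\alpha)$, Lemma~\ref{refA4} applied to the random sum $\sum_{i=1}^{\xi}T^{(i)}$ (in which $\xi$ and $T$ are regularly varying of the \emph{same} index) gives
\[ P(T>x)=P\Big(\sum_{i=1}^{\xi}T^{(i)}>x-1\Big)\sim \alpha\,P(T>x)+P\Big(\xi>\tfrac{x-1}{ET}\Big)\sim \alpha\,P(T>x)+(1-\alpha)^{-\kappa}P(\xi>x), \]
using that $(x-1)/ET=(x-1)(1-\alpha)\sim(1-\alpha)x$ and the regular variation of $\xi$. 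Rearranging yields $(1-\alpha)P(T>x)\sim(1-\alpha)^{-\kappa}P(\xi>x)$, i.e.\ exactly the assertion of the lemma. Thus the whole problem reduces to proving that $T$ is regularly varying of index $\kappa$.

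To secure this—and this is the crux—I would complement the lower bound with an upper bound on the remainder $R_n:=T-T_n=\sum_{k>n}Z_k$. Conditioning on generation $n$ and using the branching property, $R_n\overset{d}{=}\sum_{\ell=1}^{Z_n}\big(T^{(\ell)}-1\big)$ with the $T^{(\ell)}$ i.i.d.\ copies of $T$ independent of $Z_n$. From $P(T>x)\le P(T_n>(1-\varepsilon)x)+P(R_n>\varepsilon x)$ it suffices to show the remainder is negligible as $n\to\infty$: here $Z_n$ is regularly varying of index $\kappa$ by Lemma~\ref{lemmaGW2} with prefactor $c_n^{Z}=(\alpha^{n}-\alpha^{\kappa n})/(\alpha-\alpha^{\kappa})\to0$ and $EZ_n=\alpha^{n}\to0$, so a further application of Lemma~\ref{refA4} to $R_n$ would give $\limsup_{x}P(R_n>\varepsilon x)/P(\xi>x)=\varepsilon^{-\kappa}\big(\alpha^{n}\Gamma+c_n^{Z}(\alpha/(1-\alpha))^{\kappa}\big)\to0$, where $\Gamma:=\limsup_{x}P(T>x)/P(\xi>x)$; feeding this into the split and letting $x\to\infty$, then $n\to\infty$, then $\varepsilon\to0$ forces $\Gamma\le(1-\alpha)^{-\kappa-1}$. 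The main obstacle is the circularity lurking in this last step: $R_n$ is itself built from copies of $T$, so applying Lemma~\ref{refA4} to it presupposes that $T$ is regularly varying and, in particular, that $\Gamma<\infty$. The delicate point is therefore to obtain \emph{a priori} the finiteness of $\Gamma$; once that is in hand, the self-consistent inequality above closes the loop and simultaneously yields regular variation and the exact constant. Because $\xi$ is heavy-tailed (which precludes Kesten-type exponential control of the random sum $\sum_{i=1}^{\xi}T^{(i)}$), I expect this finiteness to require either a truncation argument on the fixed-point equation or an adaptation of the Basrak-type tail analysis used elsewhere in the paper for the stationary law $X$, with the finite fractional moments $E(T^{h})<\infty$ for $1<h<\kappa$ from Lemma~\ref{lemmaGW1} entering as the key input.
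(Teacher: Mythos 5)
Your proposal has a genuine gap, and you have in fact named it yourself: everything hinges on first establishing that $T$ is regularly varying of index $\kappa$ (equivalently, on an a priori bound $\Gamma:=\limsup_{x}P(T>x)/P(\xi>x)<\infty$), and your sketch never proves this. The lower bound via $T\ge T_n$ and Lemma~\ref{lemmaGW3} is correct and does identify the constant $(1-\alpha)^{-\kappa-1}$, but both your upper-bound steps --- applying Lemma~\ref{refA4} to the fixed-point identity, and applying it again to the remainder $R_n=\sum_{\ell=1}^{Z_n}(T^{(\ell)}-1)$ --- require as a hypothesis that the summands $T^{(\ell)}$ are regularly varying with tails comparable to $P(\xi>x)$, which is exactly the conclusion sought. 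Moreover, the escape route you propose cannot work as stated: the fractional moments $E(T^{h})<\infty$ for $1<h<\kappa$ from Lemma~\ref{lemmaGW1} only give, via Markov's inequality, $P(T>x)=O(x^{-h})$ with $h<\kappa$, and $x^{-h}/\bigl(x^{-\kappa}L(x)\bigr)\to\infty$, so no amount of moment control below $\kappa$ yields $\Gamma<\infty$. Closing the loop would require a genuinely different device (truncation of the fixed-point equation, or a Basrak-type analysis), which you gesture at but do not supply; as written, the proof is incomplete.

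For comparison, the paper's proof is two lines precisely because it invokes Lemma~\ref{refA5} (Asmussen--Foss), which is purpose-built for this circularity: it is a theorem about the fixed-point equation $T\overset{\text{d}}{=}Q+\sum_{i=1}^{N}T_{m}$ whose tail hypothesis is placed on $Q+cN$ (here $1+c\,\xi$, intermediate regularly varying because $\xi$ is regularly varying), \emph{not} on the unknown $T$. It then delivers directly
\begin{equation*}
P(T>x)\sim\frac{1}{1-\alpha}\,P\bigl(1+ET\cdot\xi>x\bigr)\sim\frac{1}{(1-\alpha)^{\kappa+1}}\,P(\xi>x),
\end{equation*}
simultaneously proving the regular variation of $T$ and the constant. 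If you want to avoid citing Asmussen--Foss, you would essentially have to reprove a version of it, which is the missing (and substantial) content of your sketch.
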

	\begin{proof}
		By branching property, we have
		$$T\overset{\text{d}}{=}1+\sum_{i=1}^{\xi}T^{(i)},$$
		where $\{T^{(i)}\}_{i}$ is i.i.d and has the same distribution as $T$.
		
		Then by Lemma \ref{refA5}, as $x\rightarrow\infty$,
		\begin{align*}
			P(T>x)\sim\frac{1}{1-\alpha}\cdot P(1+ET\cdot\xi>x)\sim\frac{1}{(1-\alpha)^{\kappa+1}}P(\xi>x).
		\end{align*}
	\end{proof}

	\section{Regular variation of stationary distribution}	
	Recall that $X$ is the stationary distribution of $\{X_{n}\}$. Define a sequence of independent random variables $C_{0}:=\eta_{0}$ and 
	$$C_{n}:=\theta_{n}^{(n)}\circ\theta_{n-1}^{(n)}\circ\cdots\circ\theta_{1}^{(n)}\circ\eta_{n},\quad n\geq1,$$
	where $\eta_{n}(n\geq0)$ are independent with the same distribution as $\eta$, $\theta_{i}^{(n)}(n\geq1)$ are independent with the same distribution as $\theta_{i}$, then
	$$X\overset{d}{=}\sum_{n=0}^{\infty}C_{n}.$$
	
	The next two results of the tail distribution of $X$ are proved in Basrak et.al\cite{Basrak}, but under more restrictive moment conditions in the first model. Here we give a proof similar but under lower moments of $\xi$ by using Lemma \ref{lemmaGW1}.
	
	\begin{lemma}\label{lemmaA}
		Assume (\ref{conditionA1})-(\ref{conditionA3}) are satisfied, then as $x\rightarrow\infty$,
		\begin{equation*}
			P(X>x)\sim\frac{1}{1-\alpha^{\kappa}}P(\eta>x).
		\end{equation*}
	\end{lemma}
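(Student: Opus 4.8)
The plan is to work from the series representation $X\overset{d}{=}\sum_{n=0}^{\infty}C_{n}$ and to establish the single--big--jump statement that the tail of $X$ is the sum of the tails of the $C_{n}$. First I would pin down the tail of each summand. By the branching property $C_{n}\overset{d}{=}\sum_{j=1}^{\eta_{n}}Z_{n}^{(j)}$, where the $Z_{n}^{(j)}$ are i.i.d.\ copies of $Z_{n}$ independent of $\eta_{n}$ and $EZ_{n}=\alpha^{n}$. Since $\eta$ is regularly varying of index $\kappa$, Lemma \ref{refA2} applied to this random sum gives $P(C_{n}>x)\sim(EZ_{n})^{\kappa}P(\eta>x)=\alpha^{\kappa n}P(\eta>x)$. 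The improved moment condition (\ref{conditionA3}) enters exactly here: for $\kappa\ge 1$, Lemma \ref{lemmaGW1} with $h=\kappa+\delta$ yields the $n$--uniform bound $\sup_{n}EZ_{n}^{\kappa+\delta}\le ET^{\kappa+\delta}<\infty$, so the size of a single $Z_{n}^{(j)}$ contributes only $O(x^{-(\kappa+\delta)})=o(P(\eta>x))$ and the tail of $C_{n}$ is governed by $\eta_{n}$ alone; for $\kappa\in(0,1)$ the bare first moment $EZ_{n}\le ET<\infty$ already suffices for the same conclusion.

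Next I would write $X=S_{N}+R_{N}$ with $S_{N}=\sum_{n=0}^{N}C_{n}$ and $R_{N}=\sum_{n>N}C_{n}\ge 0$ independent of $S_{N}$. For fixed $N$, closure of the regularly varying (subexponential) tails under finite convolution gives $P(S_{N}>x)\sim\big(\sum_{n=0}^{N}\alpha^{\kappa n}\big)P(\eta>x)$. Since $R_{N}\ge 0$ one has $P(X>x)\ge P(S_{N}>x)$, while for any $\epsilon\in(0,1)$ the inclusion $\{X>x\}\subseteq\{S_{N}>(1-\epsilon)x\}\cup\{R_{N}>\epsilon x\}$ gives $P(X>x)\le P(S_{N}>(1-\epsilon)x)+P(R_{N}>\epsilon x)$. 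Dividing by $P(\eta>x)$ and using the regular variation relation $P(\eta>(1-\epsilon)x)\sim(1-\epsilon)^{-\kappa}P(\eta>x)$, I arrive at
\[
\sum_{n=0}^{N}\alpha^{\kappa n}\le\liminf_{x\to\infty}\frac{P(X>x)}{P(\eta>x)}\le\limsup_{x\to\infty}\frac{P(X>x)}{P(\eta>x)}\le(1-\epsilon)^{-\kappa}\sum_{n=0}^{N}\alpha^{\kappa n}+\limsup_{x\to\infty}\frac{P(R_{N}>\epsilon x)}{P(\eta>x)}.
\]

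The main obstacle is the final term: I must show $\lim_{N\to\infty}\limsup_{x\to\infty}P(R_{N}>\epsilon x)/P(\eta>x)=0$, i.e.\ a tail bound on the infinite remainder that is uniform in $N$ and summable. A single Markov inequality is useless here, since the only available exponents lie below $\kappa$ (as $E\eta^{\kappa}$ may be infinite) and decay too slowly against $P(\eta>x)$. Instead I would prove a uniform per--summand estimate $P(C_{n}>x)\le c\,\alpha^{(\kappa-\epsilon')n}P(\eta>x)$, valid for all large $x$ and all $n$ with $\epsilon'\in(0,\kappa)$: the event that $\eta_{n}$ is large is handled by Potter's bounds through $P(\eta>x/\alpha^{n})\le c_{\epsilon'}\alpha^{(\kappa-\epsilon')n}P(\eta>x)$, while the event that some $Z_{n}^{(j)}$ is large is suppressed uniformly by the $n$--free moment bound $\sup_{n}EZ_{n}^{\kappa+\delta}\le ET^{\kappa+\delta}$ of Lemma \ref{lemmaGW1}. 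Feeding this into a standard truncation of the remainder (controlling the accumulated small jumps by a suitable moment of $R_{N}$, and the rare event of two or more large summands by a second--order term) bounds $P(R_{N}>\epsilon x)$ by $c\,\alpha^{(\kappa-\epsilon')(N+1)}P(\eta>x)$ up to lower order, which vanishes as $N\to\infty$ because $\alpha^{\kappa-\epsilon'}<1$. Letting $N\to\infty$ and then $\epsilon\to 0$ in the displayed sandwich squeezes both sides to $\sum_{n=0}^{\infty}\alpha^{\kappa n}=(1-\alpha^{\kappa})^{-1}$, which proves the lemma.
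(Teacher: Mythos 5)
Your skeleton matches the paper's proof exactly: the series representation $X\overset{d}{=}\sum_{n\ge0}C_{n}$, the per-summand asymptotics $P(C_{n}>x)\sim\alpha^{\kappa n}P(\eta>x)$ via Lemma \ref{refA2} with the moment input from Lemma \ref{lemmaGW1}, and the finite-part/remainder sandwich. The gap is in the only genuinely hard step, the remainder estimate, which you correctly single out as the main obstacle but do not actually prove: the two ingredients you cite cannot jointly deliver your claimed uniform bound $P(C_{n}>x)\le c\,\alpha^{(\kappa-\epsilon')n}P(\eta>x)$. What is needed there is geometric decay in $n$ \emph{and} an $x$-threshold that does not depend on $n$, simultaneously. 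Estimating the event that some $Z_{n}^{(j)}$ is large with the $n$-free bound $\sup_{n}EZ_{n}^{\kappa+\delta}\le ET^{\kappa+\delta}$ yields a term of order $x^{-(\kappa+\delta)}$ carrying no decay in $n$ whatsoever, so these bounds cannot be summed over $n>N$; estimating it instead with the geometric bound $EZ_{n}^{\kappa+\delta}\le cv^{n}$ from the proof of Lemma \ref{lemmaGW1} requires $(v/\alpha^{\kappa-\epsilon'})^{n}\le c\,x^{\delta}L(x)$ for the comparison with $\alpha^{(\kappa-\epsilon')n}x^{-\kappa}L(x)$, which fails for large $n$ at any fixed $x$, because $v\ge\alpha$ always (for integer-valued $Z_{n}$ and $h\ge1$ one has $EZ_{n}^{h}\ge EZ_{n}=\alpha^{n}$) while $\alpha^{\kappa-\epsilon'}<\alpha$ whenever $\kappa-\epsilon'>1$ --- i.e.\ precisely in the regime $\kappa\ge1$ that must be treated here (the case $\kappa\in(0,1)$ is quoted from Basrak et al.). In addition, your per-summand bound never addresses the scenario in which no single $Z_{n}^{(j)}$ is large but many moderately large ones conspire.

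The second concrete failure is the phrase ``controlling the accumulated small jumps by a suitable moment of $R_{N}$'': no such untruncated moment exists. Moments of $R_{N}$ of order $\ge\kappa$ are generally infinite (already $E\eta^{\kappa}=\infty$ when $L\equiv1$), and for any $r<\kappa$ Markov's inequality gives only $P(R_{N}>\epsilon x)\le c_{N}x^{-r}$, which is not $O(x^{-\kappa}L(x))$ --- exactly the objection you yourself raise against ``a single Markov inequality.'' Truncated moments do work, but truncating $C_{i}$ (or $R_{N}$) at $x$-proportional levels and invoking Karamata for each $C_{i}$ runs into the same uniformity problem, since the onset of the asymptotic $P(C_{i}>x)\sim\alpha^{\kappa i}P(\eta>x)$ is $i$-dependent. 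The device that resolves this tension --- and which your sketch is missing --- is the paper's choice to truncate on the $\eta_{i}$ themselves, which are identically distributed, at the $i$-dependent levels $x/v^{i\tilde{\kappa}}$ with $\tilde{\kappa}=\kappa+\tfrac{1}{2}\min\{1,\delta\}$: then Potter's bound (Lemma \ref{Potter}) and Karamata's theorem (Lemma \ref{Karamata}) are applied only to the single fixed law of $\eta^{\tilde{\kappa}}$, with no uniformity issue, the geometric factor enters multiplicatively through $E\bigl(C_{i}^{\tilde{\kappa}}1_{\{\eta_{i}^{\tilde{\kappa}}\le x/v^{i\tilde{\kappa}}\}}\bigr)\le cv^{i}E\bigl(\eta^{\tilde{\kappa}};\eta^{\tilde{\kappa}}\le x/v^{i\tilde{\kappa}}\bigr)$, and Markov at exponent $\tilde{\kappa}>\kappa$ together with Minkowski's inequality makes everything summable in $i$ uniformly in large $x$. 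Without this (or an equivalent mechanism achieving $n$-decay and $x$-uniformity at once), your third paragraph restates what must be proved rather than proving it.
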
 
	\begin{proof}
		The case $\kappa\in(0,1)$ is same as Basrak et.al\cite{Basrak}, so we only consider $\kappa\geq1$.

		By Lemma \ref{refA2}, for each $n\in\mathbb{N_{+}}$,
		$$P(C_{n}>x)\sim P(\eta>\frac{x}{EZ_{n}})=\alpha^{\kappa n}P(\eta>x),$$
		which means each $C_{n}$ is regularly varying with index $\kappa$. Since they are independent, by Lemma \ref{refA1}, the finite summation is also regularly varying, i.e.,
		$$\lim_{x\rightarrow\infty}\frac{P(\sum_{i=0}^{n}C_{i}>x)}{P(\eta>x)}=\frac{1-\alpha^{\kappa(n+1)}}{1-\alpha^{\kappa}}.$$
		
		Thus we have the lower bound
		\begin{equation*}
			\varliminf_{x\rightarrow\infty}\frac{P(X>x)}{P(\eta>x)}
			\geq
			\lim_{n\rightarrow\infty}\lim_{x\rightarrow\infty}\frac{P(\sum_{i=0}^{n}C_{i}>x)}{P(\eta>x)}
			=\frac{1}{1-\alpha^{\kappa}}.
		\end{equation*}
		
		As for the upper bound
		\begin{equation*}
			\varlimsup_{x\rightarrow\infty}\frac{P(X>x)}{P(\eta>x)}\leq
			\frac{1}{1-\alpha^{\kappa}},
		\end{equation*}			
		notice that $\forall\varepsilon>0$, $\forall n\in\mathbb{N_{+}}$,
		\begin{align*}
			P(X>x)\leq P(\sum_{i=0}^{n}C_{i}>(1-\varepsilon)x)
			+P(\sum_{i=n+1}^{\infty}C_{i}>\varepsilon x).
		\end{align*}
		
		Write $\tilde{\kappa}:=\kappa+\frac{1}{2}\min\{1,\delta\}$, where $\delta$ is define in \eqref{conditionA3}. Then it is sufficient to show
		\begin{equation}\label{eq3A1}
			\lim_{n\rightarrow\infty}\varlimsup_{x\rightarrow\infty}
			\frac{P(\sum_{i=n}^{\infty}C_{i}>x^{1/\tilde{\kappa}})}{P(\eta>x^{1/\tilde{\kappa}})}=\lim_{n\rightarrow\infty}\varlimsup_{x\rightarrow\infty}
			\frac{P((\sum_{i=n}^{\infty}C_{i})^{\tilde{\kappa}}>x)}{P(\eta^{\tilde{\kappa}}>x)}=0.
		\end{equation}

		Now we focus on proving \eqref{eq3A1}. 
		
		Recall that under condition \eqref{conditionA3} and $\tilde{\kappa}<\kappa+\delta$, it is shown in Lemma \ref{lemmaGW1} that there exits constants $c>0$ and $v<1$, such that $\forall i\in\mathbb{N_{+}}$,
		$$E(Z_{i}^{\tilde{\kappa}})<cv^{i}.$$
		
		Notice 
		\begin{equation}\label{eq3A2}
			\begin{aligned}
				\frac{P((\sum_{i=n}^{\infty}C_{i})^{\tilde{\kappa}}>x)}{P(\eta^{\tilde{\kappa}}>x)}
				\leq\frac{P(\cup_{i\geq n}\{\eta_{i}^{\tilde{\kappa}}>x/v^{i\tilde{\kappa}}\})}{P(\eta^{\tilde{\kappa}}>x)}
				+\frac{P\{(\sum_{i=n}^{\infty}C_{i}1_{\{\eta_{i}^{\tilde{\kappa}}\leq x/v^{i\tilde{\kappa}}\}})^{\tilde{\kappa}}>x\}}{P(\eta^{\tilde{\kappa}}>x)}.
			\end{aligned}
		\end{equation}
		
		The first term of \eqref{eq3A2} can be bound by
		\begin{equation}\label{eq3A3}
			\begin{aligned}
				\frac{P(\cup_{i\geq 	n}\{\eta_{i}^{\tilde{\kappa}}>x/v^{i\tilde{\kappa}}\})}{P(\eta^{\tilde{\kappa}}>x)}\leq\sum_{i=n}^{\infty}\frac{P(\eta^{\tilde{\kappa}}>x/v^{i\tilde{\kappa}})}{P(\eta^{\tilde{\kappa}}>x)},
			\end{aligned}		
		\end{equation}
		and since $$P(\eta^{\tilde{\kappa}}>x)=P(\eta>x^{1/\tilde{\kappa}})=x^{-\kappa/\tilde{\kappa}}L(x^{1/\tilde{\kappa}}),$$
		the random variable $\eta^{\tilde{\kappa}}$ is regularly varying with index $\kappa/\tilde{\kappa}\in(0,1).$ So using Potter's bound (Lemma \ref{Potter}), for any chosen $A>1$, $B=\kappa/2\tilde{\kappa}>0$, there exists $X=X(A,B)$ such that for all $x\geq X, x/v^{i\tilde{\kappa}}\geq X$,
		\begin{align*}
			\frac{P(\eta^{\tilde{\kappa}}>x/v^{i\tilde{\kappa}})}{P(\eta^{\tilde{\kappa}}>x)}\leq A\max\{(\frac{1}{v^{i\tilde{\kappa}}})^{-\frac{\kappa}{\tilde{\kappa}}+\frac{\kappa}{2\tilde{\kappa}}},(\frac{1}{v^{i\tilde{\kappa}}})^{-\frac{\kappa}{\tilde{\kappa}}-\frac{\kappa}{2\tilde{\kappa}}}\}=Av^{i\kappa/2},
		\end{align*}
		so the first term tends to zero by first letting $x\rightarrow\infty$ and  then letting $n\rightarrow\infty$ in \eqref{eq3A3}.

		The second term of \eqref{eq3A2} can be bound by
		\begin{equation}\label{eq3A4}		
			\begin{aligned}
				\left[\frac{P\{(\sum_{i=n}^{\infty}C_{i}1_{\{\eta_{i}^{\tilde{\kappa}}\leq x/v^{i\tilde{\kappa}}\}})^{\tilde{\kappa}}>x\}}{P(\eta^{\tilde{\kappa}}>x)}\right]^{1/\tilde{\kappa}}
				&\leq
				\left[\frac{E(\sum_{i=n}^{\infty}C_{i}1_{\{\eta_{i}^{\tilde{\kappa}}\leq x/v^{i\tilde{\kappa}}\}})^{\tilde{\kappa}}}{xP(\eta^{\tilde{\kappa}}>x)}\right]^{1/\tilde{\kappa}}\\
				&\leq
				\sum_{i=n}^{\infty}\left[\frac{E(C_{i}^{\tilde{\kappa}}1_{\{\eta_{i}^{\tilde{\kappa}}\leq x/v^{i\tilde{\kappa}}\}})}{xP(\eta^{\tilde{\kappa}}>x)}\right]^{1/\tilde{\kappa}},
			\end{aligned}
		\end{equation}
		by using Markov inequality and Minkowski's inequality.
		
		Denote $Z_{i}^{(j)}:=\theta_{i}^{(j)}\circ\theta_{i-1}^{(j)}\circ\cdots\circ\theta_{1}^{(j)}\circ1$, then $\{Z_{i}^{(j)}\}_{j}$ are independent and have the same distribution as the underlying process $Z_{i}$, and
		\begin{equation}\label{eq3A5}
			\begin{aligned}
				E(C_{i}^{\tilde{\kappa}}1_{\{\eta_{i}^{\tilde{\kappa}}\leq x/v^{i\tilde{\kappa}}\}})
				=&\sum_{m^{\tilde{\kappa}}\leq x/v^{i\tilde{\kappa}}}
				E(C_{i}^{\tilde{\kappa}}1_{\{\eta_{i}=m\}})\\	
				=&\sum_{m^{\tilde{\kappa}}\leq x/v^{i\tilde{\kappa}}}
				E(\sum_{j=1}^{m}Z_{i}^{(j)})^{\tilde{\kappa}}P(\eta=m)\\
				\leq&\sum_{m^{\tilde{\kappa}}\leq x/v^{i\tilde{\kappa}}}
				\{\sum_{j=1}^{m}[E(Z_{i}^{^{\tilde{\kappa}}})]^{1/\tilde{\kappa}}\}^{\tilde{\kappa}}P(\eta=m)\\
				\leq&\sum_{m^{\tilde{\kappa}}\leq x/v^{i\tilde{\kappa}}}
				cv^{i}m^{\tilde{\kappa}}P(\eta=m)\\
				=&cv^{i}E(\eta^{\tilde{\kappa}};\eta^{\tilde{\kappa}}\leq x/v^{i\tilde{\kappa}}).
			\end{aligned}
		\end{equation}
		
		Combining \eqref{eq3A4} and \eqref{eq3A5}, we have
		\begin{equation}\label{eq3A6}
			\begin{aligned}
				\left[\frac{P\{(\sum_{i=n}^{\infty}C_{i}1_{\{\eta_{i}^{\tilde{\kappa}}<x/v^{i\tilde{\kappa}}\}})^{\tilde{\kappa}}>x\}}{P(\eta^{\tilde{\kappa}}>x)}\right]^{1/\tilde{\kappa}}
				\leq c\sum_{i=n}^{\infty}\left[v^{i}\frac{E(\eta^{\tilde{\kappa}};\eta^{\tilde{\kappa}}\leq x/v^{i\tilde{\kappa}})}{xP(\eta^{\tilde{\kappa}}>x)}\right]^{1/\tilde{\kappa}}.
			\end{aligned}
		\end{equation}
		
		By Potter's Bound same as before, there exists $X_{1}$ such that for all $x\geq X_{1}, x/v^{i\tilde{\kappa}}\geq X_{1}$,
		\begin{align*}
			\frac{P(\eta^{\tilde{\kappa}}>x/v^{i\tilde{\kappa}})}{P(\eta^{\tilde{\kappa}}>x)}\leq A\max\{(\frac{1}{v^{i\tilde{\kappa}}})^{-\frac{\kappa}{\tilde{\kappa}}+\frac{1+\kappa-\tilde{\kappa}}{2\tilde{\kappa}}},(\frac{1}{v^{i\tilde{\kappa}}})^{-\frac{\kappa}{\tilde{\kappa}}-\frac{1+\kappa-\tilde{\kappa}}{2\tilde{\kappa}}}\}=Av^{i(\frac{\tilde{\kappa}+\kappa-1}{2})}.
		\end{align*}
		
		Using Karamata's Theorem for truncated moments (Lemma \ref{Karamata}), we have
		\begin{align*}
			\lim_{x\rightarrow\infty}\frac{E(\eta^{\tilde{\kappa}};\eta^{\tilde{\kappa}}\leq x/v^{i\tilde{\kappa}})}{xP(\eta^{\tilde{\kappa}}>x)}
			=\lim_{x\rightarrow\infty}\frac{\kappa}{\tilde{\kappa}-\kappa}\cdot\frac{P(\eta^{\tilde{\kappa}}>x/v^{i\tilde{\kappa}})}{v^{i\tilde{\kappa}}P(\eta^{\tilde{\kappa}}>x)},
		\end{align*}
		so there exists $X_{2}>X_{1}$, for all $x\geq X_{2}$, each summand on the right of (\ref{eq3A6}) can be bound by
		$$v^{i}\frac{E(\eta^{\tilde{\kappa}};\eta^{\tilde{\kappa}}\leq x/v^{i\tilde{\kappa}})}{xP(\eta^{\tilde{\kappa}}>x)}
		\leq cv^{i}\frac{P(\eta^{\tilde{\kappa}}>x/v^{i\tilde{\kappa}})}{v^{i\tilde{\kappa}}P(\eta^{\tilde{\kappa}}>x)}
		\leq
		cv^{i(\frac{\kappa+1-\tilde{\kappa}}{2})},$$
		and by first letting $x\rightarrow\infty$ then $n\rightarrow\infty$ in (\ref{eq3A6}), we get, the second term in (\ref{eq3A2}) tends to zero also.

	\end{proof}
	
	\begin{lemma}[Basrak et.al\cite{Basrak}]\label{lemmaB}
		Assume (\ref{conditionB1})-(\ref{conditionB4}) are satisfied, then as $x\rightarrow\infty$,
		$$P(X>x)\sim\frac{1}{1-\alpha^{\kappa}}\left(\frac{\beta}{1-\alpha}+p\right)P(\xi>x),$$
		or equivalently, when $p>0$,
		$$P(X>x)\sim\frac{1}{1-\alpha^{\kappa}}\left(\frac{p^{-1}\beta}{1-\alpha}+1\right)P(\eta>x).$$
	\end{lemma}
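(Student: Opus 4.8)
The plan is to reproduce, in this second model, the scheme used for Lemma \ref{lemmaA}, working from the series representation $X \overset{d}{=} \sum_{n=0}^{\infty} C_n$ of independent summands, where $C_0 = \eta_0$ and, for $n\ge 1$, $C_n = \sum_{j=1}^{\eta_n} Z_n^{(j)}$ with $\{Z_n^{(j)}\}_j$ i.i.d.\ copies of the underlying process $Z_n$, independent of $\eta_n$. The first step is to pin down the tail of each $C_n$. Here $C_n$ is a random sum of the regularly varying summands $Z_n^{(j)}$, for which Lemma \ref{lemmaGW2} gives $P(Z_n>x)\sim b_n P(\xi>x)$ with $b_n:=(\alpha^n-\alpha^{\kappa n})/(\alpha-\alpha^{\kappa})$ and $EZ_n=\alpha^n$, while the number of summands $\eta_n$ is either lighter than $\xi$ under \eqref{conditionB3} ($p=0$) or regularly varying with $P(\eta>x)\sim pP(\xi>x)$ under \eqref{conditionB4}. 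The random-sum estimates of Lemma \ref{refA3} and Lemma \ref{refA4} (applied with summand $Z_n$ and count $\eta_n$) then yield, for $p\ge 0$,
$$P(C_n>x)\sim E\eta\cdot P(Z_n>x)+(EZ_n)^{\kappa}P(\eta>x)\sim a_n P(\xi>x),\qquad a_n:=\beta b_n+p\alpha^{\kappa n},$$
the two terms reflecting the two ways $C_n$ gets large: a single descendant family $Z_n^{(j)}$ is large, or the immigration $\eta_n$ is large.

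Summing the coefficients recovers the claimed constant: $\sum_{n=0}^{\infty}\alpha^{\kappa n}=(1-\alpha^{\kappa})^{-1}$ and $\sum_{n=0}^{\infty} b_n=\frac{1}{\alpha-\alpha^{\kappa}}\bigl(\tfrac{1}{1-\alpha}-\tfrac{1}{1-\alpha^{\kappa}}\bigr)=\frac{1}{(1-\alpha)(1-\alpha^{\kappa})}$, so that $\sum_{n\ge0}a_n=\frac{1}{1-\alpha^{\kappa}}\bigl(\tfrac{\beta}{1-\alpha}+p\bigr)$; the alternative form with $P(\eta>x)$ then follows at once from $P(\eta>x)\sim pP(\xi>x)$ when $p>0$. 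For the lower bound I would note that, for each fixed $N$, Lemma \ref{refA1} gives $P\bigl(\sum_{n=0}^{N}C_n>x\bigr)\sim\bigl(\sum_{n=0}^{N}a_n\bigr)P(\xi>x)$, and since $X\ge\sum_{n=0}^{N}C_n$, letting $x\to\infty$ and then $N\to\infty$ yields $\varliminf_{x\to\infty}P(X>x)/P(\xi>x)\ge\sum_{n\ge0}a_n$.

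For the matching upper bound I would split, for $\varepsilon\in(0,1)$,
$$P(X>x)\le P\Bigl(\sum_{n=0}^{N}C_n>(1-\varepsilon)x\Bigr)+P\Bigl(\sum_{n=N+1}^{\infty}C_n>\varepsilon x\Bigr),$$
where the first term is $\sim(1-\varepsilon)^{-\kappa}\bigl(\sum_{n=0}^{N}a_n\bigr)P(\xi>x)$ by Lemma \ref{refA1}. Everything then reduces to showing the remainder is negligible, i.e.\ $\lim_{N\to\infty}\varlimsup_{x\to\infty}P\bigl(\sum_{n>N}C_n>\varepsilon x\bigr)/P(\xi>x)=0$; this is the main obstacle. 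Because $\kappa>1$ forces $\alpha^{\kappa n}=o(\alpha^{n})$, the coefficients $a_n$ decay geometrically and $\sum_{n>N}a_n\to0$, so heuristically the remainder contributes $\sim(\sum_{n>N}a_n)P(\xi>x)$, but making this uniform in $x$ is delicate. The crucial difference from Lemma \ref{lemmaA} is that there the heavy variable was $\eta$ and one truncated it while using the moment $E(\xi^{\tilde\kappa})<\infty$ of the light offspring with $\tilde\kappa>\kappa$; here the heavy variable is $\xi$, so $Z_n$ has no finite moment of any order $>\kappa$ and that truncation cannot be reused. Under \eqref{conditionB3} I would instead truncate the offspring (equivalently the $Z_n^{(j)}$) at a level proportional to $x$ and use the $(\kappa+\delta)$-moment of the now-lighter $\eta$, mirroring the Potter (Lemma \ref{Potter}) and Karamata (Lemma \ref{Karamata}) computation of Lemma \ref{lemmaA} with the roles of $\xi$ and $\eta$ interchanged. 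Under \eqref{conditionB4}, where $\eta$ and $\xi$ are both regularly varying of index $\kappa$ and neither has a $\kappa$-th moment, both families must be truncated and the truncated sum estimated by a Fuk--Nagaev / single-big-jump argument, with the geometric factors $\alpha^{n}$ and $\alpha^{\kappa n}$ guaranteeing summability of the resulting bounds. This last sub-case is the genuinely technical heart of the statement and is exactly what is carried out in Basrak et al.\ \cite{Basrak}; collecting the estimates and letting $\varepsilon\downarrow0$ gives $\varlimsup_{x\to\infty}P(X>x)/P(\xi>x)\le\sum_{n\ge0}a_n$, matching the lower bound.
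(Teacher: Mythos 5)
You should first note that the paper itself does not prove this lemma: it is stated with the attribution to Basrak et.al \cite{Basrak} and used as a quoted result, in contrast with Lemma \ref{lemmaA}, which the paper reproves under weakened moment assumptions. So the comparison here is really with the paper's proof of Lemma \ref{lemmaA} (whose scheme you follow) and with the proof in \cite{Basrak} (to which you ultimately defer). The parts of your argument that are carried out are correct: the identification $P(C_n>x)\sim a_nP(\xi>x)$ with $a_n=\beta b_n+p\alpha^{\kappa n}$ follows from Lemma \ref{lemmaGW2} together with Lemma \ref{refA3} (case \eqref{conditionB3}, $p=0$) or Lemma \ref{refA4} (case \eqref{conditionB4}); the sums $\sum_n b_n=\frac{1}{(1-\alpha)(1-\alpha^{\kappa})}$ and $\sum_n a_n=\frac{1}{1-\alpha^{\kappa}}\bigl(\frac{\beta}{1-\alpha}+p\bigr)$ are right; and the lower bound via Lemma \ref{refA1} and monotonicity of the partial sums is exactly the argument the paper uses for Lemma \ref{lemmaA}.

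The gap is where you say it is: the upper bound requires $\lim_{N\to\infty}\varlimsup_{x\to\infty}P(\sum_{n>N}C_n>\varepsilon x)/P(\xi>x)=0$, and your proposal only describes a strategy (truncation with the roles of $\xi$ and $\eta$ interchanged under \eqref{conditionB3}; a Fuk--Nagaev/single-big-jump bound under \eqref{conditionB4}) without executing it. Your diagnosis of why the Lemma \ref{lemmaA} computation does not transfer is accurate --- there the summands $Z_i$ admitted a moment of order $\tilde\kappa>\kappa$ by \eqref{conditionA3}, which is precisely what the Markov/Minkowski step in \eqref{eq3A4}--\eqref{eq3A5} needs after truncating the heavy counts $\eta_i$, whereas here $Z_n$ is itself regularly varying of index $\kappa$ and has no such moment, and under \eqref{conditionB4} neither factor does. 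But naming the right tool is not the same as applying it: the geometric decay of $a_n$ gives summability of the individual tail constants, not uniformity in $x$ of the infinitely many error terms, and that uniformity is the entire content of the remainder estimate. So as a self-contained proof the proposal is incomplete at its technical heart; as a reduction to \cite{Basrak} it lands at the same place as the paper, which simply cites the result without proof.
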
 

	\section{Large deviation of partial sum: proof of Theorem \ref{theoremA} and \ref{theoremB}}
	
	\subsection{Decomposition of $S_{n}$}
	For convenience, we use the notation
	\begin{equation*}
		\Pi_{i,j}=\left\{
		\begin{aligned}
			&\theta_{j}\circ\theta_{j-1}\circ\cdots\circ\theta_{i}, \quad i\leq j,\\
			&1, \quad\quad\quad\quad\quad\quad\quad\quad i>j.
		\end{aligned}
		\right.
	\end{equation*}
	
	Then
	$$X_{n}=\sum_{i=1}^{n}\Pi_{i+1,n}\circ\eta_{i},$$
	and
	\begin{equation}\label{decomSn1}
		\begin{aligned}
			S_{n}&=X_{1}+X_{2}+\cdots+X_{n}\\
			&=\sum_{m=1}^{n}\sum_{i=1}^{m}\Pi_{i+1,m}\circ\eta_{i}\\
			&=\sum_{i=1}^{n}\sum_{m=i}^{n}\Pi_{i+1,m}\circ\eta_{i}\\
			&:=Y_{1}+\cdots+Y_{n-1}+Y_{n},
		\end{aligned}
	\end{equation}
	where  $Y_{i}=\sum_{m=i}^{n}\Pi_{i+1,m}\circ\eta_{i} \,(1\leq i\leq n)$.
	
	Then $Y_{1},\cdots,Y_{n}$ are independent and for each $1\leq i \leq n$, $Y_{i}$ has the same distribution as the total population up to $(n-i)$th generation of the underlying branching process $\{Z_{n}\}$, with $Z_{0}\overset{\text{d}}{=}\eta$.
	Precisely, using the branching and stationary property, we have, for $1\leq i\leq n$,
	\begin{equation}\label{decom1i}
		Y_{i}\overset{\text{d}}{=}T_{n-i}^{(1)}+T_{n-i}^{(2)}+\cdots+T_{n-i}^{(\eta_{i})},
	\end{equation}
	where $\{T_{n}^{(m)}\}_{m}$ are independent and have the same distribution as $T_{n}$ defined in (\ref{defTn}).\\
	
	Also we can write
	\begin{equation}\label{decomSn2}
		\begin{aligned}
			S_{n}&=\sum_{i=1}^{n}\sum_{m=i}^{n}\Pi_{i+1,m}\circ\eta_{i}\\
			&=\sum_{i=1}^{n}\sum_{m=i}^{\infty}\Pi_{i+1,m}\circ\eta_{i}
			-\sum_{i=1}^{n}\sum_{m=n+1}^{\infty}\Pi_{i+1,m}\circ\eta_{i}\\			&:=S_{n,1}-S_{n,2}.
		\end{aligned}
	\end{equation}
	
	The first term of the right hand in (\ref{decomSn2}) is
	\begin{align*}
		S_{n,1}
		&=\sum_{m=1}^{\infty}\Pi_{2,m}\circ\eta_{1}+\cdots+\sum_{m=n}^{\infty}\Pi_{n+1,m}\circ\eta_{n}\\
		&:=Y_{1}^{(\infty)}+\cdots+Y_{n}^{(\infty)},
	\end{align*}
	where $\{Y_{i}^{(\infty)}\}_{i=1}^{n}$ are independent and have the same distribution as 
	\begin{equation}\label{decom1}
		Y^{(\infty)}\overset{\text{d}}{=}T^{(1)}+T^{(2)}+\cdots+T^{(\eta)},
	\end{equation}
	with $\{T^{(m)}\}_{m}$ being independent and having the same distribution as $T$ defined in (\ref{defT}).

	The second term  of the right hand in (\ref{decomSn2}) is	
	\begin{align*}
		S_{n,2}
		:&=\sum_{i=1}^{n}(\sum_{m=n+1}^{\infty}\Pi_{n+1,m}\circ\Pi_{i+1,n}\circ\eta_{i})\\
		&=\sum_{m=n+1}^{\infty}\Pi_{n+1,m}\circ(\sum_{i=1}^{n}\Pi_{i+1,n}\circ\eta_{i})\\
		&=\sum_{m=n+1}^{\infty}\Pi_{n+2,m}\circ\theta_{n+1}\circ(\sum_{i=1}^{n}\Pi_{i+1,n}\circ\eta_{i}).
	\end{align*}
	
	So 
	\begin{equation}\label{decom2}
		S_{n,2}
		\overset{\text{d}}{=}T^{(1)}+T^{(2)}+\cdots+T^{(\theta\circ X_{n})},
	\end{equation}
	with $\{T^{(m)}\}_{m}$ being independent and having the same distribution as $T$ defined in (\ref{defT}).

	\subsection{Regular variation of partial sum }
	
	By \eqref{decomSn1}, for fixed $n\in\mathbb{N_{+}}$, $S_{n}$ is consist of $n$ independent regularly varying random variables, so we can identify the regular variation of $S_{n}$ by that of $Y_{i}(1\leq i \leq n)$.

	\begin{proposition}\label{theoremA0}
		Assume (\ref{conditionA1})-(\ref{conditionA3}) are satisfied, then $\forall n\in\mathbb{N_{+}}$,
		\begin{equation*}
			\lim_{x\rightarrow\infty}\frac{P(S_{n}>x)}{P(\eta>x)}
			=\sum_{i=1}^{n}[(\sum_{m=0}^{i-1}\alpha^{m})^{\kappa}].
		\end{equation*}
	\end{proposition}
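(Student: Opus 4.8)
The plan is to exploit the decomposition \eqref{decomSn1}, which writes $S_{n}=Y_{1}+\cdots+Y_{n}$ as a sum of the \emph{independent} random variables $Y_{i}$, and then to reduce the tail of $S_{n}$ to the tails of the individual $Y_{i}$. Since \eqref{decom1i} identifies each $Y_{i}$ in distribution with a random sum $T_{n-i}^{(1)}+\cdots+T_{n-i}^{(\eta_{i})}$ of $\eta_{i}$ independent copies of the total population $T_{n-i}$, the first task is to determine the tail of each such random sum, and the second is to patch the $n$ tails together.

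For the first task I would apply the random-sum result Lemma \ref{refA2}, in which the number of summands $\eta$ is regularly varying with index $\kappa$ while the summands have a lighter tail. To invoke it I need $T_{n-i}$ to possess a moment of order strictly larger than $\kappa$. When $\kappa\geq 1$ this is exactly where condition \eqref{conditionA3} enters: by Lemma \ref{lemmaGW1}, $E\xi^{\kappa+\delta}<\infty$ forces $E(T_{n-i}^{\kappa+\delta})\leq E(T^{\kappa+\delta})<\infty$, so $T_{n-i}$ has the required finite moment; when $\kappa\in(0,1)$ only the finiteness of $ET_{n-i}$ is needed, which holds automatically since $ET_{n-i}=(1-\alpha^{n-i+1})/(1-\alpha)<\infty$. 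Lemma \ref{refA2} then yields
$$P(Y_{i}>x)\sim (ET_{n-i})^{\kappa}P(\eta>x)=\left(\sum_{m=0}^{n-i}\alpha^{m}\right)^{\kappa}P(\eta>x),\qquad x\to\infty,$$
so that each $Y_{i}$ is regularly varying with index $\kappa$ and the same slowly varying part as $\eta$ (note the edge case $i=n$ gives $T_{0}=1$ and $Y_{n}\overset{\text{d}}{=}\eta_{n}$, consistent with $(ET_{0})^{\kappa}=1$).

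For the second task, since $Y_{1},\ldots,Y_{n}$ are independent and each is regularly varying with index $\kappa$, the tail of their sum is asymptotically the sum of their tails by Lemma \ref{refA1}; hence
$$P(S_{n}>x)\sim\sum_{i=1}^{n}P(Y_{i}>x)\sim\sum_{i=1}^{n}\left(\sum_{m=0}^{n-i}\alpha^{m}\right)^{\kappa}P(\eta>x).$$
Re-indexing the outer sum by $i\mapsto n-i+1$ turns the coefficient into $\sum_{i=1}^{n}(\sum_{m=0}^{i-1}\alpha^{m})^{\kappa}$, which is the claimed constant. I expect the only genuinely delicate point to be the verification that the hypotheses of the random-sum Lemma \ref{refA2} hold for every $Y_{i}$ --- that is, controlling the moments of $T_{n-i}$ well enough that the heavier tail of $\eta$ dominates the random sum --- and this is precisely what Lemma \ref{lemmaGW1} together with \eqref{conditionA3} is designed to supply. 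The remaining steps are bookkeeping with the means $ET_{n-i}$ and a routine application of the independent-sum lemma.
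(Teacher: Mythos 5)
Your proposal is correct and follows essentially the same route as the paper's proof: decompose $S_{n}=Y_{1}+\cdots+Y_{n}$ via \eqref{decomSn1}, identify each $Y_{i}$ through \eqref{decom1i} as a random sum of copies of $T_{n-i}$, apply Lemma \ref{refA2} (with the moment hypothesis supplied by Lemma \ref{lemmaGW1} and \eqref{conditionA3} when $\kappa\geq1$) to get $P(Y_{i}>x)\sim(ET_{n-i})^{\kappa}P(\eta>x)$, and finish with Lemma \ref{refA1}. The only differences are cosmetic (your indexing versus the paper's $Y_{n-i+1}$ convention), and your handling of the two cases $\kappa\in(0,1)$ and $\kappa\geq1$ matches the paper's.
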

	
	\begin{proof} 
		When $\kappa\in(0,1)$, $ET_{i-1}<\infty$. When $\kappa\geq1$, it is shown in Lemma \ref{lemmaGW1} that $ET_{i-1}^{\kappa+\delta}<\infty$, so by \eqref{decom1i}, Lemma \ref{lemmaA} and Lemma \ref{refA2}, for $1\leq i\leq n$,
		\begin{equation*}
			P(Y_{n-i+1}>x)\sim (ET_{i-1})^{\kappa}P(\eta>x)=(\sum_{m=0}^{i-1}\alpha^{m})^{\kappa}P(\eta>x).
		\end{equation*}
		
		Then the tail distribution of $S_{n}$ followed by Lemma \ref{refA1} and \eqref{decomSn1}.
	\end{proof}
	
	\begin{proposition}\label{theoremB0}
		Assume (\ref{conditionB1})-(\ref{conditionB4}) are satisfied, then $\forall n\in\mathbb{N_{+}}$,
		\begin{equation*}
			\lim_{x\rightarrow\infty}\frac{P(S_{n}>x)}{P(\xi>x)}	=\beta\sum_{i=1}^{n}\sum_{m=0}^{i-2}\alpha^{m}(\frac{1-\alpha^{i-1-m}}{1-\alpha})^{\kappa}+p\sum_{i=1}^{n}(\sum_{m=0}^{i-1}\alpha^{m})^{\kappa}.
		\end{equation*}
	\end{proposition}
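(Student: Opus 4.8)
The plan is to exploit the decomposition \eqref{decomSn1}, $S_n = Y_1 + \cdots + Y_n$ with $Y_1,\dots,Y_n$ independent, in exactly the same spirit as the proof of Proposition \ref{theoremA0}, so that the tail of $S_n$ is reassembled from the tails of the individual $Y_i$ via Lemma \ref{refA1}. Reindexing through $i \mapsto n-i+1$, the representation \eqref{decom1i} gives $Y_{n-i+1} \overset{\text{d}}{=} T_{i-1}^{(1)} + \cdots + T_{i-1}^{(\eta)}$, a random sum of $\eta$ i.i.d.\ copies of $T_{i-1}$. First I would reduce the whole statement to determining the tail of each such random sum for $1 \le i \le n$, and then add the contributions. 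The decisive difference from the first model is that now the \emph{summands} $T_{i-1}$ carry the regular variation (inherited from $\xi$ via Lemma \ref{lemmaGW3}), while the role of the heavy count is played by $\eta$ only when \eqref{conditionB4} holds.

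The core step is the tail asymptotics of $Y_{n-i+1} = \sum_{j=1}^{\eta} T_{i-1}^{(j)}$. Here $T_{i-1}$ is regularly varying with index $\kappa$ by Lemma \ref{lemmaGW3}, while the count $\eta$ is either asymptotically negligible relative to $\xi$ (under \eqref{conditionB3}, where $p=0$) or itself regularly varying with the same index $\kappa$ satisfying $P(\eta>x)\sim p\,P(\xi>x)$ (under \eqref{conditionB4}). Applying the random-sum lemmas \ref{refA3} and \ref{refA4}, precisely as they were used to prove Lemmas \ref{lemmaGW2} and \ref{lemmaGW3}, produces a contribution $E\eta\,P(T_{i-1}>x)$ from a single large summand together with a contribution $(ET_{i-1})^{\kappa}P(\eta>x)$ from a large value of $\eta$ (the latter present only when $p>0$), so that
\[
P(Y_{n-i+1}>x) \sim \left[\beta\,\frac{P(T_{i-1}>x)}{P(\xi>x)} + p\,(ET_{i-1})^{\kappa}\right]P(\xi>x).
\]
Inserting the explicit constant $P(T_{i-1}>x) \sim \sum_{m=0}^{i-2}\alpha^{m}\big(\tfrac{1-\alpha^{i-1-m}}{1-\alpha}\big)^{\kappa}P(\xi>x)$ from Lemma \ref{lemmaGW3} and the mean $ET_{i-1} = \sum_{m=0}^{i-1}\alpha^{m}$ yields the tail of $Y_{n-i+1}$ in closed form. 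The degenerate index $i=1$ must be treated separately, since $T_0=1$ is deterministic rather than regularly varying: there $Y_n \overset{\text{d}}{=} \eta$, so $P(Y_n>x)\sim p\,P(\xi>x)$, which agrees with the $i=1$ term $p$ in the claimed identity.

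Finally, since the $Y_i$ are independent and each is regularly varying of index $\kappa$, Lemma \ref{refA1} permits summing the tails, and carrying out the sum over $1 \le i \le n$ reproduces the two displayed sums with prefactors $\beta$ and $p$. The main obstacle I anticipate lies in the second paragraph: correctly invoking the random-sum asymptotics in the regime \eqref{conditionB4}, where the summands $T_{i-1}$ and the count $\eta$ are regularly varying of the \emph{same} index $\kappa$, so that the two heavy-tailed mechanisms contribute additively at the same order and must not be double-counted. Verifying the hypotheses of Lemmas \ref{refA3}--\ref{refA4} is then routine: under \eqref{conditionB3} the moment bound $E\eta^{\kappa+\delta}<\infty$ renders the count negligible, while $ET_{i-1}<\infty$ holds automatically because $\kappa>1$; under \eqref{conditionB4} the regular variation of $\eta$ matches that of $T_{i-1}$, which is exactly what the second term records.
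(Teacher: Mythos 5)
Your proposal is correct and follows essentially the same route as the paper's own proof: the decomposition $S_n = Y_1+\cdots+Y_n$ from \eqref{decomSn1} and \eqref{decom1i}, the random-sum asymptotics $P(Y_{n-i+1}>x)\sim E\eta\,P(T_{i-1}>x)+P(\eta>x/ET_{i-1})$ via Lemmas \ref{refA3}--\ref{refA4} combined with Lemma \ref{lemmaGW3}, and the final summation by Lemma \ref{refA1}. Your separate treatment of the degenerate case $i=1$ (where $T_0=1$ and $Y_n\overset{\text{d}}{=}\eta$) is a detail the paper leaves implicit, and it is handled correctly.
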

	
	\begin{proof}
		For $1\leq i\leq n$, similarly,
		\begin{align*}
			P(Y_{n-i+1}>x)
			&\sim E\eta P(T_{i-1}>x)+ P(\eta>\frac{x}{ET_{i-1}})\\
			&\sim\beta\sum_{m=0}^{i-2}\alpha^{m}(\frac{1-\alpha^{i-1-m}}{1-\alpha})^{\kappa}P(\xi>x)
			+p(\sum_{m=0}^{i-1}\alpha^{m})^{\kappa}P(\xi>x).
		\end{align*}
		
		Then the tail distribution of $S_{n}$ followed by Lemma \ref{refA1} and \eqref{decomSn1}.
	\end{proof}
	
	\subsection{Proof of Theorem \ref{theoremA} and Theorem \ref{theoremB}}	
	
	We start with the proof of \eqref{eqtheoremA1} and \eqref{eqtheoremB1}.
	
	Recall the decomposition of $S_{n}$ in (\ref{decomSn2}) and observe that, for $\forall$ small $\varepsilon>0$,
	\begin{align*}
		&P(S_{n,1}-d_{n,1}>(1+\varepsilon)x)-P(S_{n,2}-d_{n,2}>\varepsilon x)\\
		\leq &P\{(S_{n,1}-d_{n,1})-(S_{n,2}-d_{n,2})>x\}\\
		\leq &P(S_{n,1}-d_{n,1}>(1-\varepsilon)x)+P(-S_{n,2}+d_{n,2}>\varepsilon x),
	\end{align*}
	where for $1\leq i\leq2$,
	\begin{equation*}
		d_{n,i}=\left\{
		\begin{aligned}
			\begin{array}{cl}
				0, &\kappa\in(0,1],\\
				ES_{n,i}, &\kappa\in(1,\infty).
			\end{array}
		\end{aligned}
		\right.
	\end{equation*}
	
	Define
	\begin{equation*}
		\left\{
		\begin{aligned}
			&I_{1}(x):=P(S_{n,1}-d_{n,1}>(1+\varepsilon)x)\\
			&I_{2}(x):=P(S_{n,1}-d_{n,1}>(1-\varepsilon)x)\\
			&I_{3}(x):=P(S_{n,2}-d_{n,2}>\varepsilon x)\\
			&I_{4}(x):=P(-S_{n,2}+d_{n,2}>\varepsilon x).
		\end{aligned}
		\right.
	\end{equation*}

	Then for $\forall$ small $\varepsilon>0$,
	\begin{align*}
		I_{1}(x)-I_{3}(x)\leq P(S_{n}-d_{n}>x) \leq I_{2}(x)+I_{4}(x).
	\end{align*}
	
	We will prove the following two propositions in what follows,
	\begin{proposition}[Estimation of $S_{n,1}$]\label{prop1}
		For $i=1,2$,
		
		(i) If (\ref{conditionA1})-(\ref{conditionA3}) are satisfied, then
		\begin{equation*}
			\lim_{\varepsilon\rightarrow0}\lim_{n\rightarrow\infty}\sup_{x\geq x_{n}}\left|\frac{I_{i}(x)}{nP(\eta>x)}-\frac{1}{(1-\alpha)^{\kappa}}\right|=0.
		\end{equation*}
		
		(ii) If (\ref{conditionB1})-(\ref{conditionB4}) are satisfied, then
		\begin{equation*}
			\lim_{\varepsilon\rightarrow0}\lim_{n\rightarrow\infty}\sup_{x\geq x_{n}}\left|\frac{I_{i}(x)}{nP(\xi>x)}-\frac{\beta+p(1-\alpha)}{(1-\alpha)^{\kappa+1}}\right|=0.
		\end{equation*}
		
	\end{proposition}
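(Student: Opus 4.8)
The plan is to recognize $S_{n,1}=Y_{1}^{(\infty)}+\cdots+Y_{n}^{(\infty)}$ as a sum of $n$ independent copies of a single regularly varying random variable, and then to invoke the precise large deviation theorem for i.i.d.\ regularly varying sequences (Theorem \ref{refB}), matching the tail index, the centering, and the deviation threshold to the present setting.

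First I would pin down the common tail of the summand $Y^{(\infty)}$. By the representation \eqref{decom1}, $Y^{(\infty)}\overset{\text{d}}{=}T^{(1)}+\cdots+T^{(\eta)}$ is a random sum of independent copies of the total population $T$ of the underlying process. In the first model $T$ has all moments by Lemma \ref{lemmaGW1} while $\eta$ carries the regular variation, so Lemma \ref{refA2} together with $ET=1/(1-\alpha)$ gives
$$P(Y^{(\infty)}>x)\sim (ET)^{\kappa}P(\eta>x)=\frac{1}{(1-\alpha)^{\kappa}}P(\eta>x).$$
In the second model both the summands $T$ (regularly varying by Lemma \ref{lemmaGW4}) and the number of terms $\eta$ (when $p>0$) contribute, so Lemma \ref{refA3} and Lemma \ref{refA4} give
$$P(Y^{(\infty)}>x)\sim E\eta\,P(T>x)+(ET)^{\kappa}P(\eta>x)=\frac{\beta+p(1-\alpha)}{(1-\alpha)^{\kappa+1}}P(\xi>x).$$
In either case $Y^{(\infty)}$ is regularly varying with index $\kappa$, and its tail constant is exactly the target constant in the statement.

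Next I would apply Theorem \ref{refB} to the i.i.d.\ sum $S_{n,1}$. The centering $d_{n,1}$ equals $0$ for $\kappa\in(0,1]$ and $ES_{n,1}=nEY^{(\infty)}$ for $\kappa>1$, matching precisely the centering in that theorem; the only extra hypothesis, finiteness of $EY^{(\infty)}=\beta/(1-\alpha)$ when $\kappa>1$, holds because $\eta$ has finite mean in that range. The admissible thresholds in Theorem \ref{refB} --- namely $x_{n}=n^{\delta+1/\kappa}$ for $\kappa\in(0,2]$ and $x_{n}=\sqrt{an\log n}$ with $a>\kappa-2$ for $\kappa>2$ --- are exactly the sequences quoted in Theorems \ref{theoremA} and \ref{theoremB}. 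The theorem then yields, uniformly in $y\geq x_{n}$,
$$P(S_{n,1}-d_{n,1}>y)\sim n\,P(Y^{(\infty)}>y).$$
Substituting $y=(1\pm\varepsilon)x$ (for $I_{1}$ and $I_{2}$ respectively) and using the uniform convergence theorem for regularly varying tails, I would obtain $P(Y^{(\infty)}>(1\pm\varepsilon)x)\sim(1\pm\varepsilon)^{-\kappa}c\,P(\eta>x)$ (respectively $P(\xi>x)$), where $c$ is the tail constant from the first step. Letting $n\to\infty$ and then $\varepsilon\to0$, the factor $(1\pm\varepsilon)^{-\kappa}\to1$ and the claimed constants $\tfrac{1}{(1-\alpha)^{\kappa}}$ and $\tfrac{\beta+p(1-\alpha)}{(1-\alpha)^{\kappa+1}}$ emerge.

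The main obstacle I expect is bookkeeping the two limits together with the uniformity: one must check that replacing $x_{n}$ by $(1\pm\varepsilon)x_{n}$ does not break the hypotheses of Theorem \ref{refB} (it does not, since the shifted threshold has the same growth order as $x_{n}$), and that the regular-variation substitution in the final step is uniform over the whole half-line $x\geq x_{n}$ rather than merely pointwise --- which is exactly what the uniform convergence theorem for regularly varying functions guarantees.
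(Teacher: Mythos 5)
Your proposal is correct and follows essentially the same route as the paper's own proof: identify the tail of $Y^{(\infty)}$ from the representation \eqref{decom1} via Lemmas \ref{lemmaGW1}, \ref{lemmaGW4}, \ref{refA2}--\ref{refA4}, apply Theorem \ref{refB} to the i.i.d.\ sum $S_{n,1}=Y_{1}^{(\infty)}+\cdots+Y_{n}^{(\infty)}$, and then absorb the factors $(1\pm\varepsilon)$ using the uniformity of regularly varying tail ratios, letting $n\rightarrow\infty$ before $\varepsilon\rightarrow0$. The only wording slip is that under (\ref{conditionA3}) Lemma \ref{lemmaGW1} gives $E(T^{\kappa+\delta})<\infty$ rather than all moments of $T$, but this is precisely the moment condition required by Lemma \ref{refA2}, so the argument stands as written.
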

	
	\begin{proposition}[Estimation of $S_{n,2}$]\label{prop2}
		For $i=3,4,$ and $\forall\varepsilon$,
		
		(i) If (\ref{conditionA1})-(\ref{conditionA3}) are satisfied, then
		\begin{equation*}
			\lim_{n\rightarrow\infty}\sup_{x\geq x_{n}}\frac{I_{i}(x)}{nP(\eta>x)}=0.
		\end{equation*}
		
		(ii) If (\ref{conditionB1})-(\ref{conditionB4}) are satisfied, then
		\begin{equation*}
			\lim_{n\rightarrow\infty}\sup_{x\geq x_{n}}\frac{I_{i}(x)}{nP(\xi>x)}=0.
		\end{equation*}
	\end{proposition}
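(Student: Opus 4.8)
The plan is to exploit the representation \eqref{decom2}, which expresses $S_{n,2}$ as a random sum $S_{n,2}\overset{\text{d}}{=}\sum_{m=1}^{\theta\circ X_{n}}T^{(m)}$ of i.i.d.\ copies of the total population $T$, the number of summands being $\theta\circ X_{n}=\theta_{n+1}\circ X_{n}$. The crucial observation is that although $X_{n}$ depends on $n$, it is stochastically dominated by the stationary law $X$: from the series representation in Section 3, $X_{n}$ is distributed as the partial sum $\sum_{i=0}^{n-1}C_{i}$ of the nonnegative independent terms whose full series is $X\overset{\text{d}}{=}\sum_{i=0}^{\infty}C_{i}$, whence $X_{n}\overset{\text{d}}{\le}X$. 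Applying the monotone coupling of the operator $\theta\circ(\cdot)$ and of the random sum, this upgrades to $S_{n,2}\overset{\text{d}}{\le}S_{\infty}:=\sum_{m=1}^{\theta\circ X}T^{(m)}$, a single random variable that does \emph{not} depend on $n$. Consequently $P(S_{n,2}>\varepsilon x)\le P(S_{\infty}>\varepsilon x)$ for every $x$ and every $n$, which decouples the estimate from $n$.

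First I would treat $I_{3}$. It suffices to show that $S_{\infty}$ is regularly varying with index $\kappa$, so that its tail ratio against the governing distribution converges and is therefore bounded. In the first model, $\theta\circ X=\sum_{i=1}^{X}\xi_{i}$ is regularly varying with index $\kappa$ by Lemma \ref{refA2} (since $X$ is regularly varying by Lemma \ref{lemmaA} and $\xi$ is lighter), while $T$ has a finite moment of order $\kappa+\delta$ by Lemma \ref{lemmaGW1}; a second application of Lemma \ref{refA2} to the random sum gives $P(S_{\infty}>x)\sim c\,P(\eta>x)$. In the second model, $T$ is itself regularly varying with index $\kappa$ by Lemma \ref{lemmaGW4} and $X$ by Lemma \ref{lemmaB}, and Lemmas \ref{refA3}--\ref{refA4} applied twice yield $P(S_{\infty}>x)\sim c\,P(\xi>x)$. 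In either case, writing $G$ for the relevant tail ($P(\eta>\cdot)$ or $P(\xi>\cdot)$) and using its regular variation, $P(S_{\infty}>\varepsilon x)/G(x)\to c\varepsilon^{-\kappa}$ as $x\to\infty$, so $\sup_{x\ge x_{n}}P(S_{\infty}>\varepsilon x)/G(x)\le C_{\varepsilon}$ once $n$ is large enough that $x_{n}$ exceeds the threshold. The domination then gives
\[
\sup_{x\ge x_{n}}\frac{I_{3}(x)}{nG(x)}\le\frac{1}{n}\sup_{x\ge x_{n}}\frac{P(S_{\infty}>\varepsilon x)}{G(x)}\le\frac{C_{\varepsilon}}{n}\longrightarrow0,
\]
which is exactly the claim for $i=3$.

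The term $I_{4}$ is even simpler and needs only that $ES_{n,2}$ be bounded uniformly in $n$. When $\kappa\in(0,1]$ (first model) we have $d_{n,2}=0$, so $I_{4}(x)=P(-S_{n,2}>\varepsilon x)=0$ because $S_{n,2}\ge0$. When $\kappa>1$ we have $d_{n,2}=ES_{n,2}=E(\theta\circ X_{n})\,ET=\alpha\,EX_{n}/(1-\alpha)$, and since $EX_{n}=\beta(1-\alpha^{n})/(1-\alpha)\uparrow\beta/(1-\alpha)$ this is bounded by a constant $D$ independent of $n$. As $S_{n,2}\ge0$, we get $d_{n,2}-S_{n,2}\le D$, so $I_{4}(x)=P(d_{n,2}-S_{n,2}>\varepsilon x)=0$ as soon as $x>D/\varepsilon$; since $x_{n}\uparrow\infty$ this holds for all $x\ge x_{n}$ once $n$ is large, giving $\sup_{x\ge x_{n}}I_{4}(x)=0$ and hence the claim for $i=4$.

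The main obstacle is the regular-variation step for $S_{\infty}$ in the second model, where both the number of summands $\theta\circ X$ and the summands $T$ are heavy-tailed with the same index $\kappa$ and both contribute to the tail of $S_{\infty}$; here one must apply the random-sum Lemmas \ref{refA3}--\ref{refA4} carefully rather than the single-source estimate of Lemma \ref{refA2}. The remaining care lies in justifying the stochastic domination $X_{n}\overset{\text{d}}{\le}X$ and the monotone coupling transferring it to $S_{n,2}\overset{\text{d}}{\le}S_{\infty}$; once that decoupling from $n$ is in place, the factor $1/n$ does all the work.
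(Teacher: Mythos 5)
Your proof is correct, and at the structural level it is the paper's proof: the same representation \eqref{decom2}, the same $n$-independent limit object $S^{(\infty)}=T^{(1)}+\cdots+T^{(\theta\circ X)}$, the same tail asymptotics for it (Lemma \ref{lemmaA} plus Lemma \ref{refA2} in the first model; Lemmas \ref{lemmaB}, \ref{lemmaGW4} and \ref{refA3}--\ref{refA4} in the second), and the factor $1/n$ doing the final work. Where you genuinely diverge is the step that makes the comparison $P(S_{n,2}>\cdot)\le c\,P(S^{(\infty)}>\cdot)$ usable uniformly in $x$. The paper argues via convergence in distribution: $X_{n}\overset{\text{d}}{\rightarrow}X$ gives $\lim_{n}P(S_{n,2}>x)=P(S^{(\infty)}>x)$ for each fixed $x$, whence $P(S_{n,2}>x)\le 2P(S^{(\infty)}>x)$ for $n>N$ --- but as written that threshold $N$ depends on $x$, and the bound is then applied over the unbounded range $x\ge x_{n}$, a uniformity lacuna that needs monotonicity to repair. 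Your stochastic-domination route ($X_{n}\overset{\text{d}}{=}\sum_{i=0}^{n-1}C_{i}\le\sum_{i=0}^{\infty}C_{i}\overset{\text{d}}{=}X$, propagated through the monotone random sums $\theta\circ(\cdot)$ and $\sum_{m\le\cdot}T^{(m)}$) yields $P(S_{n,2}>x)\le P(S^{(\infty)}>x)$ for \emph{every} $n$ and $x$ simultaneously, which is exactly the uniformity the argument requires; on this point your version is cleaner and more rigorous than the paper's. Your treatment of $I_{4}$ also differs: the paper folds it into the shifted tail bound $P(S^{(\infty)}>\varepsilon x-m)$ with $m=\beta\alpha/(1-\alpha)^{2}$, whereas you observe that $d_{n,2}\le m$ and $S_{n,2}\ge 0$ force $I_{4}(x)=0$ once $\varepsilon x>m$ --- simpler and equally valid. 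Two small points to make explicit: for $i=3$ with $\kappa>1$, note that $I_{3}(x)=P(S_{n,2}-ES_{n,2}>\varepsilon x)\le P(S_{n,2}>\varepsilon x)$ since the centering is nonnegative, so the domination bound applies after dropping it; and in model (i) with $\kappa\in(0,1)$ the appeal to Lemma \ref{lemmaGW1} is unnecessary (and unavailable, since \eqref{conditionA3} is only assumed for $\kappa\ge 1$) --- there Lemma \ref{refA2} needs nothing beyond $ET<\infty$.
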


	Then we can conclude that if (\ref{conditionA1})-(\ref{conditionA3}) are satisfied, then
	\begin{align*}
		0
		\leq&
		\varliminf_{n\rightarrow\infty}\sup_{x\geq x_{n}}\left|
		\frac{P(S_{n}-d_{n}>x)}{nP(\eta>x)}-\frac{1}{(1-\alpha)^{\kappa}}\right|\\
		\leq&
		\varlimsup_{n\rightarrow\infty}\sup_{x\geq x_{n}}\left|
		\frac{P(S_{n}-d_{n}>x)}{nP(\eta>x)}-\frac{1}{(1-\alpha)^{\kappa}}\right|\\
		\leq&
		\max\left\{\lim_{\varepsilon\rightarrow0}\lim_{n\rightarrow\infty}\sup_{x\geq   x_{n}}
		\left(\left|\frac{I_{1}(x)}{nP(\eta>x)}-\frac{1}{(1-\alpha)^{\kappa}}\right|
		+\left|\frac{I_{3}(x)}{nP(X>x)}\right|\right), \right.\\
		&\left.\qquad\;\;
		\lim_{\varepsilon\rightarrow0}\lim_{n\rightarrow\infty}\sup_{x\geq   x_{n}}
		\left(\left|\frac{I_{2}(x)}{nP(\eta>x)}-\frac{1}{(1-\alpha)^{\kappa}}\right|
		+\left|\frac{I_{4}(x)}{nP(\eta>x)}\right|\right)
		\right\}=0,
	\end{align*}
	which completes the proof of \eqref{eqtheoremA1}, and the proof of \eqref{eqtheoremB1} is the same.
	
	The proof of \eqref{eqtheoremA2} and \eqref{eqtheoremB2} follows by analogous arguments, just need to mention that for $\forall$ small $\varepsilon>0$,
	\begin{align*}
		&P(S_{n,1}-d_{n,1}\leq-(1+\varepsilon)x)-P(S_{n,2}-d_{n,2}\leq-\varepsilon x)\\
		\leq &P\{(S_{n,1}-d_{n,1})-(S_{n,2}-d_{n,2})\leq-x\}\\
		\leq & P(S_{n,1}-d_{n,1}\leq-(1-\varepsilon)x)+P(-S_{n,2}+d_{n,2}\leq-\varepsilon x).
	\end{align*}

	$\hfill\square$
	
	In rest part of this section, we will focus on the proof of the propositions.
	
	\subsection{Proof of proposition \ref{prop1}}

	\noindent\textbf{Proof of (i)} Since under (\ref{conditionA1}) - (\ref{conditionA3}), $E(T^{\kappa+\delta})<\infty$ for $\kappa\geq1$, $ET=(1-\alpha)^{-1}$ and $\eta$ is regularly varying with index $\kappa$, we have 
	\begin{equation*}
		P(Y^{(\infty)}>x)=P(\sum_{i=1}^{\eta}T^{(i)}>x)\sim \frac{1}{(1-\alpha)^{\kappa}}\cdot P(\eta>x).
	\end{equation*}	 
	Then for any sequence $a_{n}\rightarrow\infty$,
	\begin{equation}\label{eq4A1}
		\lim_{n\rightarrow\infty}\sup_{x\geq a_{n}}\left|\frac{P(Y^{(\infty)}>x)}{P(\eta>x)}-\frac{1}{(1-\alpha)^{\kappa}}\right|=0.
	\end{equation}		
	
	In fact, $\forall \varepsilon>0$, $\exists x(\varepsilon)>0$, 
	$$\sup_{x\geq x(\varepsilon)}\left|\frac{P(Y^{(\infty)}>x)}{P(\eta>x)}-\frac{1}{(1-\alpha)^{\kappa}}\right|<\varepsilon,$$
	and since $a_{n}\rightarrow\infty$, we can choose $N(\varepsilon)$ such that $\forall n>N$, $a_{n}>x(\varepsilon)$ and thus
	$$\sup_{x\geq a_{n}}\left|\frac{P(Y^{(\infty)}>x)}{P(\eta>x)}-\frac{1}{(1-\alpha)^{\kappa}}\right|\leq\sup_{x\geq x(\varepsilon)}\left|\frac{P(Y^{(\infty)}>x)}{P(\eta>x)}-\frac{1}{(1-\alpha)^{\kappa}}\right|<\varepsilon,$$
	which means (\ref{eq4A1}).

	Recall that $$S_{n,1}=Y_{1}^{(\infty)}+\cdots+Y_{n}^{(\infty)}$$
	is the summation of $n$ i.i.d random variables, using Theorem \ref{refB} where $p=1, q=0$, we have
	\begin{equation}\label{eq4A2}
		\lim_{n\rightarrow\infty}\sup_{x\geq x_{n}}
		\left|\frac{P(S_{n,1}-d_{n,1}>x)}{nP(Y^{(\infty)}>x)}-1\right|=0,
	\end{equation}
	and
	$$\lim_{n\rightarrow\infty}\sup_{x\geq x_{n}}
	\frac{P(S_{n,1}-d_{n,1}\leq -x)}{nP(Y^{(\infty)}>x)}=0,$$
	where for $\kappa\in(0,2]$, one can choose $x_{n}=n^{\delta+1/\kappa}$ for any $\delta>0$; for $\kappa\in(2,\infty)$, one can choose $x_{n}=\sqrt{an\log n}$ for $a>\kappa-2$.
	We have
	\begin{align*}
		0\leq &\varliminf_{n\rightarrow\infty}\sup_{x\geq x_{n}}
		\left|\frac{P(S_{n,1}-d_{n,1}>x)}{nP(\eta>x)}-\frac{1}{(1-\alpha)^{\kappa}}\right|\\
		\leq &\varlimsup_{n\rightarrow\infty}\sup_{x\geq x_{n}}
		\left|\frac{P(S_{n,1}-d_{n,1}>x)}{nP(\eta>x)}-\frac{1}{(1-\alpha)^{\kappa}}\right|\\
		=&\varlimsup_{n\rightarrow\infty}\sup_{x\geq x_{n}}
		\left|\frac{P(S_{n,1}-d_{n,1}>x)}{nP(Y^{(\infty)}>x)}\cdot\frac{P(Y^{(\infty)}>x)}{P(\eta>x)}-\frac{1}{(1-\alpha)^{\kappa}}\right|\\
		\leq&\varlimsup_{n\rightarrow\infty}\sup_{x\geq x_{n}}\left|\frac{P(S_{n,1}-d_{n,1}>x)}{nP(Y^{(\infty)}>x)}-1\right|\cdot
		\sup_{x\geq x_{n}}\frac{P(Y^{(\infty)}>x)}{P(\eta>x)}\\
		+&\varlimsup_{n\rightarrow\infty}\sup_{x\geq x_{n}}\left|\frac{P(Y^{(\infty)}>x)}{P(\eta>x)}-\frac{1}{(1-\alpha)^{\kappa}}\right|=0,
	\end{align*}
	by equation (\ref{eq4A1}) and (\ref{eq4A2}).
	So
	$$\lim_{n\rightarrow\infty}\sup_{x\geq x_{n}}
	\left|\frac{P(S_{n,1}-d_{n,1}>x)}{nP(\eta>x)}-\frac{1}{(1-\alpha)^{\kappa}}\right|=0,$$
	and similarly,
	$$\lim_{n\rightarrow\infty}\sup_{x\geq x_{n}}			\left|\frac{P(S_{n,1}-d_{n,1}\leq -x)}{nP(\eta>x)}\right|=0.$$	
	
	Then by the regular variation of $\eta$ and the same discussion as (\ref{eq4A1}), we have
	\begin{align*}
		&\varlimsup_{n\rightarrow\infty}\sup_{x\geq x_{n}}\left|\frac{I_{1}(x)}{nP(\eta>x)}-\frac{1}{(1-\alpha)^{\kappa}}\right|\\
		=&\varlimsup_{n\rightarrow\infty}\sup_{x\geq x_{n}}\left|\frac{P(S_{n,1}-d_{n,1}>(1-\varepsilon)x)}{nP(\eta>(1-\varepsilon)x)}\cdot\frac{P(\eta>(1-\varepsilon)x)}{P(\eta>x)}-\frac{1}{(1-\alpha)^{\kappa}}\right|\\
		\leq&\varlimsup_{n\rightarrow\infty}\sup_{x\geq x_{n}}\left|\frac{P(S_{n,1}-d_{n,1}>(1-\varepsilon)x)}{nP(\eta>(1-\varepsilon)x)}-\frac{1}{(1-\alpha)^{\kappa}}\right|\cdot\sup_{x\geq x_{n}}\frac{P(\eta>(1-\varepsilon)x)}{P(\eta>x)}\\
		+&\frac{1}{(1-\alpha)^{\kappa}}\varlimsup_{n\rightarrow\infty}\sup_{x\geq x_{n}}\left|\frac{P(\eta>(1-\varepsilon)x)}{P(\eta>x)}-1\right|
		\rightarrow 0
	\end{align*}
	as $\varepsilon\rightarrow0$, so Proposition \ref{prop1} holds for $i=1$, the proof for $i=2$ is  similar and we omit the details. 	\\
	
	\noindent\textbf{Proof of (ii)} Since under (\ref{conditionB1}) - (\ref{conditionB4}), $T$ is regularly varying with index $\kappa$, $ET=(1-\alpha)^{-1}$ and the tail of $\eta$ is not-heavier than $T$, we have
	\begin{align*}
		P(Y^{(\infty)}>x)
		&\sim E\eta P(T>x)+P(\eta>\frac{x}{ET})\\
		&\sim\frac{\beta}{(1-\alpha)^{\kappa+1}}P(\xi>x)+\frac{p}{(1-\alpha)^{\kappa}}P(\xi>x)\\
		&=\frac{\beta+p(1-\alpha)}{(1-\alpha)^{\kappa+1}}P(\xi>x).
	\end{align*}		
	
	Then the rest part of the proof is similar as (i).
	
	$\hfill\square$
	
	\subsection{Proof of proposition \ref{prop2}}
	\noindent\textbf{Proof of (i)} 
	Using the same notations as before, define 
	$$S^{(\infty)}:=T^{(1)}+T^{(2)}+\cdots+T^{(\theta\circ X)}.$$
	
	By Lemma \ref{lemmaA} and  Lemma \ref{refA2} we have
	$$P(\theta\circ X>x)=P(\sum_{i=1}^{X}\xi_{i}>x)\sim\alpha^{\kappa}P(X>x),$$
	then for independent $\theta\circ  X$ and $\{T^{(n)}\}_{n}$,
	$$P(S^{(\infty)}>x)\sim P(\theta\circ X>\frac{x}{ET})\sim\frac{\alpha^{\kappa}}{(1-\alpha)^{\kappa}(1-\alpha^{\kappa})}P(\eta>x).$$
	
	Thus for any sequence $a_{n}\rightarrow\infty$,
	\begin{equation}\label{eq4A4}
		\lim_{n\rightarrow\infty}\sup_{x\geq a_{n}}\frac{P(S^{(\infty)}>x)}{P(\eta>x)}=\frac{\alpha^{\kappa}}{(1-\alpha)^{\kappa}(1-\alpha^{\kappa})}.
	\end{equation}
	
	Since as $n\rightarrow\infty$, recall the decomposition \eqref{decom2},
	\begin{align*}
		S_{n,2}
		&\overset{\text{d}}{=}T^{(1)}+T^{(2)}+\cdots+T^{(\theta\circ X_{n})}
		\overset{\text{d}}{\rightarrow}S^{(\infty)},
	\end{align*}
	we have, for $\forall x$,
	$$\lim_{n\rightarrow\infty}P(S_{n,2}>x)=P(S^{(\infty)}>x).$$
	
	Thus $\exists N\in\mathbb{N_{+}}, \forall n>N,$
	$$P(S_{n,2}>x)\leq 2P(S^{(\infty)}>x),$$
	and
	\begin{align*}
		\sup_{x\geq x_{n}}\frac{P(S_{n,2}>\varepsilon x)}{nP(\eta>x)}
		&\leq\sup_{x\geq x_{n}}\frac{2P(S^{(\infty)}>\varepsilon x)}{nP(\eta>x)}\\
		&\leq\frac{2}{n}\cdot\sup_{x\geq x_{n}}\frac{P(S^{(\infty)}>\varepsilon x)}{P(S^{(\infty)}>x)}\cdot\frac{P(S^{(\infty)}>x)}{P(\eta>x)},
	\end{align*}
	which means
	\begin{equation*}
		\lim_{n\rightarrow\infty}\sup_{x\geq x_{n}}\frac{P(S_{n,2}>\varepsilon x)}{nP(\eta>x)}=0
	\end{equation*}
	for $\kappa\in(0,1]$.
	
	For $\kappa>1$, define
	$$ES_{n,2}=E(\theta\circ X_{n})\cdot ET=\beta\alpha\cdot\frac{1-\alpha^{n}}{1-\alpha}\cdot\frac{1}{1-\alpha}\uparrow \frac{\beta\alpha}{(1-\alpha)^{2}}:=m,$$
	then similarly, using equation (\ref{eq4A4}), for $\forall n> N$,
	\begin{align*}
		&\sup_{x\geq x_{n}}\frac{\max\{I_{3}(x),I_{4}(x)\}}{nP(\eta>x)}\\
		\leq&\sup_{x\geq x_{n}}\frac{P(S_{n,2}+ES_{n,2}>\varepsilon x)}{nP(\eta>x)}\\
		\leq&\sup_{x\geq x_{n}}\frac{2P(S^{(\infty)}>\varepsilon x-m)}{nP(\eta>x)}\\
		\leq&\frac{2}{n}\cdot\sup_{x\geq x_{n}}\frac{P(S^{(\infty)}>\varepsilon x-m)}{P(\eta>\varepsilon x-m)}\cdot \sup_{x\geq x_{n}}\frac{P(\eta>\varepsilon x-m)}{P(\eta>x)}\rightarrow0,
	\end{align*}
	as $n\rightarrow\infty$.\\
	
	\noindent\textbf{Proof of (ii)}	
	By Lemma \ref{lemmaB} and  Lemma \ref{refA4} we have
	$$P(\theta\circ X>x)\sim\alpha^{\kappa}P(X>x)+(EX)P(\xi>x)
	\sim\frac{1}{1-\alpha^{\kappa}}\left(\frac{\beta}{1-\alpha}+p\alpha^{\kappa}\right)P(\xi>x),$$
	then for independent $X$ and $\{T^{(n)}\}_{n}$,	
	$$P(S^{(\infty)}>x)\sim E(\theta\circ X)\cdot P(T>x)+P\left(\theta\circ X>\frac{x}{ET}\right),$$
	thus for any sequence $a_{n}\rightarrow\infty$,
	\begin{equation}
		\lim_{n\rightarrow\infty}\sup_{x\geq a_{n}}\frac{P(S^{(\infty)}>x)}{P(\xi>x)}=\frac{\beta\alpha}{(1-\alpha)^{\kappa+2}}+\frac{1}{(1-\alpha)^{\kappa}(1-\alpha^{\kappa})}\left(\frac{\beta}{1-\alpha}+p\alpha^{\kappa}\right).
	\end{equation}
	
	Then the rest part of the proof is same as (i).
	$\hfill\square$

	\appendix
	\begin{appendices}
		\section{Properties of regularly varying distribution}

		\begin{lemma}[Potter's Bound \cite{Bingham}]\label{Potter}
			If $\xi$ is regularly varying with index $\kappa\geq0$, then for any chosen $A>1$, $\delta>0$, there exists $X=X(A,\delta)$ such that for all $x\geq X, y\geq X$,
			$$\frac{P(\xi>y)}{P(\xi>x)}\leq A\max\{(\frac{y}{x})^{-\kappa+\delta},(\frac{y}{x})^{-\kappa-\delta}\}.$$
		\end{lemma}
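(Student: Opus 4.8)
The plan is to reduce the statement to the corresponding estimate for a slowly varying function and then invoke Karamata's representation theorem. Writing $P(\xi>x)=x^{-\kappa}L(x)$ with $L$ slowly varying, the ratio factors as
$$\frac{P(\xi>y)}{P(\xi>x)}=\left(\frac{y}{x}\right)^{-\kappa}\frac{L(y)}{L(x)},$$
so it suffices to show that for every $A>1$ and $\delta>0$ there is a threshold $X$ with $L(y)/L(x)\le A\max\{(y/x)^{\delta},(y/x)^{-\delta}\}$ whenever $x,y\ge X$; reinserting the factor $(y/x)^{-\kappa}$ then recovers the claim.

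First I would apply Karamata's representation theorem to $L$: there exist $a>0$, a measurable function $c(\cdot)$ with $c(t)\to c\in(0,\infty)$, and a measurable $\varepsilon(\cdot)$ with $\varepsilon(t)\to 0$ as $t\to\infty$, such that
$$L(x)=c(x)\exp\left(\int_{a}^{x}\frac{\varepsilon(t)}{t}\,dt\right),\qquad x\ge a.$$
Taking the quotient at $y$ and $x$ cancels the base of the integral and leaves
$$\frac{L(y)}{L(x)}=\frac{c(y)}{c(x)}\exp\left(\int_{x}^{y}\frac{\varepsilon(t)}{t}\,dt\right).$$

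Next I would exploit the two limits separately. Since $\varepsilon(t)\to0$, the given $\delta>0$ supplies a $t_{1}$ with $|\varepsilon(t)|\le\delta$ for $t\ge t_{1}$; for $x,y\ge t_{1}$ this forces
$$\left|\int_{x}^{y}\frac{\varepsilon(t)}{t}\,dt\right|\le\delta\left|\log\frac{y}{x}\right|,$$
so the exponential factor is at most $\max\{(y/x)^{\delta},(y/x)^{-\delta}\}$. Since $c(t)\to c\in(0,\infty)$, the prefactor $c(y)/c(x)$ tends to $1$, so the given $A>1$ supplies a $t_{2}$ with $c(y)/c(x)\le A$ for all $x,y\ge t_{2}$. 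Setting $X:=\max\{a,t_{1},t_{2}\}$ and multiplying the two bounds yields $L(y)/L(x)\le A\max\{(y/x)^{\delta},(y/x)^{-\delta}\}$, as required.

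The main obstacle is the representation theorem itself: everything downstream is elementary estimation of an integral together with a ratio of near-constant quantities, but establishing Karamata's representation — essentially the fact that a slowly varying $L$ admits such a form with an integrand vanishing at infinity — is where the real content sits. Since this is a standard tool (available in Bingham, Goldie and Teugels \cite{Bingham}), I would simply cite it; were a self-contained argument required, I would instead first prove the uniform convergence theorem for slowly varying functions and bootstrap the representation from it.
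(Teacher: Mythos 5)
Your proof is correct. The paper does not prove this lemma at all --- it is stated in Appendix A purely as a citation to Bingham, Goldie and Teugels \cite{Bingham} --- and your argument (factor out $(y/x)^{-\kappa}$, apply Karamata's representation theorem to the slowly varying part, bound the exponential integral by $\delta|\log(y/x)|$ once $|\varepsilon(t)|\le\delta$, and absorb the near-constant ratio $c(y)/c(x)$ into the factor $A$) is precisely the standard proof given in that reference, so there is nothing to compare beyond noting that you have supplied the proof the paper outsources.
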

		
		\begin{lemma}[Karamata's Theorem For Truncated Moments \cite{Bingham}]\label{Karamata}
			If $\xi$ is regularly varying with index $\kappa>0$, then for $\forall\tilde{\kappa}>\kappa$,
			$$\lim_{x\rightarrow\infty}\frac{E(\xi^{\tilde{\kappa}};\xi\leq x)}{x^{\tilde{\kappa}}P(\xi>x)}=\frac{\kappa}{\tilde{\kappa}-\kappa},$$
		\end{lemma}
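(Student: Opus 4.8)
The plan is to reduce the statement to the integral (``direct half'') of Karamata's theorem, reproved in a self-contained way from the Fubini layer-cake representation of the truncated moment together with Potter's Bound (Lemma \ref{Potter}), which is already available. Write $\overline{F}(y):=P(\xi>y)=y^{-\kappa}L(y)$. Starting from the pathwise identity $\xi^{\tilde\kappa}=\int_0^{\xi}\tilde\kappa y^{\tilde\kappa-1}\,dy$ and Tonelli's theorem, I would first obtain
\begin{equation*}
E(\xi^{\tilde\kappa};\xi\le x)=\int_0^\infty \tilde\kappa y^{\tilde\kappa-1}P(y<\xi\le x)\,dy=\tilde\kappa\int_0^x y^{\tilde\kappa-1}\bigl(\overline{F}(y)-\overline{F}(x)\bigr)\,dy,
\end{equation*}
which holds for any nonnegative $\xi$, so that neither continuity of the law nor the integer structure of $\xi$ plays any role.

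Next I would split this into two pieces. The term carrying $-\overline{F}(x)$ integrates elementarily, $\tilde\kappa\,\overline{F}(x)\int_0^x y^{\tilde\kappa-1}\,dy=x^{\tilde\kappa}\overline{F}(x)$, so after dividing by $x^{\tilde\kappa}\overline{F}(x)$ it contributes exactly $-1$. For the main piece I would rescale by $y=xt$ to get
\begin{equation*}
\frac{\tilde\kappa\int_0^x y^{\tilde\kappa-1}\overline{F}(y)\,dy}{x^{\tilde\kappa}\overline{F}(x)}=\tilde\kappa\int_0^1 t^{\tilde\kappa-1}\frac{\overline{F}(xt)}{\overline{F}(x)}\,dt.
\end{equation*}
For each fixed $t\in(0,1]$ the integrand tends to $t^{\tilde\kappa-1-\kappa}$ by regular variation of $\overline{F}$, and $\tilde\kappa\int_0^1 t^{\tilde\kappa-1-\kappa}\,dt=\frac{\tilde\kappa}{\tilde\kappa-\kappa}$ is finite precisely because $\tilde\kappa>\kappa$. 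Adding the two contributions yields $\frac{\tilde\kappa}{\tilde\kappa-\kappa}-1=\frac{\kappa}{\tilde\kappa-\kappa}$, as claimed.

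The main obstacle is justifying the interchange of limit and integral in the rescaled display. On the range $t\ge X/x$ I would invoke Potter's Bound to dominate $\overline{F}(xt)/\overline{F}(x)$ by $A\,t^{-\kappa-\delta}$ (here $t\le1$, so the maximum in Lemma \ref{Potter} is the $-\kappa-\delta$ branch), giving the integrable envelope $A\tilde\kappa\,t^{\tilde\kappa-1-\kappa-\delta}$ as soon as $\delta<\tilde\kappa-\kappa$ is fixed, whence dominated convergence applies. The residual range $t\in(0,X/x)$, where Potter's Bound is not guaranteed, must be handled directly: bounding $\overline{F}(xt)\le1$ shows this part is at most $\frac{X^{\tilde\kappa}}{\tilde\kappa\,x^{\tilde\kappa-\kappa}L(x)}$, which vanishes exactly because $\tilde\kappa>\kappa$ forces $x^{\tilde\kappa-\kappa}\to\infty$ against the slowly varying $L$. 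This is also where the hypothesis $\tilde\kappa>\kappa$ is indispensable: it is what makes the truncated moment diverge and be governed by the tail $x^{\tilde\kappa}\overline{F}(x)$ rather than converging to a finite constant.
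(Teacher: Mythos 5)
Your proposal is correct. Note, however, that the paper does not prove this lemma at all: it is stated in Appendix A as a known result quoted from Bingham--Goldie--Teugels \cite{Bingham}, so the only meaningful comparison is with the classical argument, and yours is essentially that classical ``direct half'' of Karamata's theorem, done self-containedly. Each step checks out: the layer-cake identity $E(\xi^{\tilde\kappa};\xi\le x)=\tilde\kappa\int_0^x y^{\tilde\kappa-1}(\overline F(y)-\overline F(x))\,dy$ holds for any nonnegative random variable by Tonelli; the $-\overline F(x)$ piece contributes exactly $-1$; the rescaled main piece $\tilde\kappa\int_0^1 t^{\tilde\kappa-1}\overline F(xt)/\overline F(x)\,dt$ tends to $\tilde\kappa/(\tilde\kappa-\kappa)$, with the Potter envelope $A\,t^{-\kappa-\delta}$ (the $-\kappa-\delta$ branch is indeed the right one for $t\le 1$, and $\delta<\tilde\kappa-\kappa$ makes it integrable) handling $t\ge X/x$ and the crude bound $\overline F\le 1$ handling $t<X/x$, where the residual is of order $X^{\tilde\kappa}/(x^{\tilde\kappa-\kappa}L(x))\to 0$ since $x^{\tilde\kappa-\kappa}L(x)$ is regularly varying with positive index; finally $\tilde\kappa/(\tilde\kappa-\kappa)-1=\kappa/(\tilde\kappa-\kappa)$. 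Two cosmetic remarks: the factor $1/\tilde\kappa$ in your residual bound is consistent only if ``this part'' means the integral without the prefactor $\tilde\kappa$ (the discrepancy is a harmless multiplicative constant), and when invoking dominated convergence it is cleanest to write the main piece as $\int_0^1 t^{\tilde\kappa-1}\bigl(\overline F(xt)/\overline F(x)\bigr)1_{\{t\ge X/x\}}\,dt$ plus the residual, so that the dominating function is manifestly independent of $x$. What your route buys over the bare citation is that it uses only Lemma \ref{Potter} (itself merely quoted) and elementary integration, needs no density or continuity assumptions on $\xi$ --- so it applies verbatim to the integer-valued $\eta$ of the paper --- and makes explicit that the hypothesis $\tilde\kappa>\kappa$ enters exactly twice: integrability of the limit envelope and divergence of the normalization $x^{\tilde\kappa}\overline F(x)$.
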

		
		\begin{lemma}[Davis and Resnick \cite{Davis}]\label{refA1}
			Suppose $Y_{1},\cdots,Y_{n}$ are nonnegative random variables(but not necessarily independent or identically distributed).
			If $Y_{1}$ is regularly varying with index $\kappa>0$ and
			\begin{equation*}
				\lim_{x\rightarrow\infty}\frac{P(Y_{i}>x)}{P(Y_{1}>x)}= c_{i}, \quad i=1,2,...,n,
			\end{equation*}
			\begin{equation*}
				\lim_{x\rightarrow\infty}\frac{P(Y_{i}>x, Y_{j}>x)}{P(Y_{1}>x)}= 0, \quad i\neq j,
			\end{equation*}
			then
			\begin{equation*}
				\lim_{x\rightarrow\infty}\frac{P(\sum_{i=1}^{n}Y_{i}>x)}{P(Y_{1}>x)}=c_{1}+c_{2}+\cdots+c_{n}.
			\end{equation*}
		\end{lemma}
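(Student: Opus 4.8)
The plan is to prove matching lower and upper bounds for the ratio $P(S>x)/P(Y_1>x)$, where I abbreviate $S:=\sum_{i=1}^{n}Y_i$, exploiting the \emph{single big jump} principle: a large value of $S$ is typically produced by one large summand, while the simultaneous largeness of two summands is negligible by the joint-tail hypothesis. For the lower bound I would observe that, since the $Y_i$ are nonnegative, each event $\{Y_i>x\}$ forces $\{S>x\}$, so $\{S>x\}\supseteq\bigcup_{i=1}^{n}\{Y_i>x\}$. Applying the second Bonferroni inequality then gives
\begin{equation*}
P(S>x)\geq P\Bigl(\bigcup_{i=1}^{n}\{Y_i>x\}\Bigr)\geq\sum_{i=1}^{n}P(Y_i>x)-\sum_{1\leq i<j\leq n}P(Y_i>x,\,Y_j>x).
\end{equation*}
Dividing by $P(Y_1>x)$ and letting $x\to\infty$, the first sum tends to $\sum_{i}c_i$ by the marginal-tail hypothesis while every joint-tail term vanishes, so that $\varliminf_{x\to\infty}P(S>x)/P(Y_1>x)\geq c_1+\cdots+c_n$.

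For the upper bound I would fix $\delta\in(0,1/n)$ and split $\{S>x\}$ according to whether a single summand dominates. The elementary inclusion I intend to use is
\begin{equation*}
\{S>x\}\subseteq\bigcup_{i=1}^{n}\{Y_i>(1-(n-1)\delta)x\}\cup\bigcup_{1\leq i<j\leq n}\{Y_i>\delta x,\,Y_j>\delta x\},
\end{equation*}
which holds because if at most one $Y_i$ exceeds $\delta x$, then either exactly one does, in which case that summand must already exceed $x-(n-1)\delta x$, or none does, in which case $S\leq n\delta x<x$, a contradiction. Taking probabilities and dividing by $P(Y_1>x)$, I would evaluate the marginal terms using the hypothesis at the argument $(1-(n-1)\delta)x$ together with regular variation of $Y_1$, namely $P(Y_1>(1-(n-1)\delta)x)/P(Y_1>x)\to(1-(n-1)\delta)^{-\kappa}$, while the double sum is controlled by writing $P(Y_i>\delta x,\,Y_j>\delta x)=\tfrac{P(Y_i>\delta x,\,Y_j>\delta x)}{P(Y_1>\delta x)}\cdot\tfrac{P(Y_1>\delta x)}{P(Y_1>x)}$ and noting the first factor vanishes and the second converges to $\delta^{-\kappa}$. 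This leaves
\begin{equation*}
\varlimsup_{x\to\infty}\frac{P(S>x)}{P(Y_1>x)}\leq(1-(n-1)\delta)^{-\kappa}\sum_{i=1}^{n}c_i.
\end{equation*}
Finally I would let $\delta\downarrow0$, so the prefactor $(1-(n-1)\delta)^{-\kappa}\to1$, which matches the lower bound and yields the claim.

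There is no genuine obstacle here, only careful bookkeeping: the marginal and joint limits must be applied at the shifted arguments $(1-(n-1)\delta)x$ and $\delta x$, all of which tend to infinity, and regular variation of $Y_1$ is what converts $P(Y_1>\gamma x)$ into $\gamma^{-\kappa}P(Y_1>x)$ in the limit. The one point to state explicitly is that the interchange of the $x\to\infty$ and $\delta\to0$ limits is legitimate, since for each fixed $\delta$ the bound already holds uniformly as $x\to\infty$ before $\delta$ is sent to zero.
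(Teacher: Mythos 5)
Your proof is correct. Note that the paper itself does not prove this lemma at all --- it is stated in Appendix A as a known result quoted directly from Davis and Resnick \cite{Davis} --- so there is no in-paper argument to compare against; your single-big-jump decomposition is essentially the standard argument underlying the cited result, and it is complete as written. The lower bound via Bonferroni uses nonnegativity and the vanishing joint tails exactly as needed, and the upper-bound inclusion is valid: if at least two summands exceed $\delta x$ the event is covered by the pairwise terms, if exactly one exceeds $\delta x$ then that summand must exceed $(1-(n-1)\delta)x$, and if none does then $S\leq n\delta x<x$ is impossible. Both the marginal and joint hypotheses are legitimately applied at the shifted arguments $(1-(n-1)\delta)x$ and $\delta x$, since these tend to infinity for fixed $\delta$, and regular variation of $Y_1$ supplies the factors $(1-(n-1)\delta)^{-\kappa}$ and $\delta^{-\kappa}$. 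One small remark: your closing concern about interchanging the $x\to\infty$ and $\delta\downarrow 0$ limits is not actually an issue, and no uniformity claim is needed. The quantity $\varlimsup_{x\to\infty}P(S>x)/P(Y_1>x)$ does not depend on $\delta$, so once the bound $(1-(n-1)\delta)^{-\kappa}\sum_{i=1}^{n}c_i$ is established for each fixed $\delta\in(0,1/n)$, taking the infimum over $\delta$ immediately gives $\varlimsup_{x\to\infty}P(S>x)/P(Y_1>x)\leq\sum_{i=1}^{n}c_i$, which together with your lower bound finishes the proof.
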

		
		\begin{lemma}[Fay et.al \cite{Fay}, Robert et.al\cite{Robert}, Barczy et.al\cite{Barczy}]\label{refA2}
			Assume that $\eta$ is a non-negative integer-valued random variable, $\{\xi_{i}\}$ is an i.i.d non-negative sequence and independent of $\eta$, such that 
			
			(i) $\eta$ is regularly varying with index $\kappa>0$;
			
			(ii) $0<E\xi<\infty$;
			
			(iii) if $\kappa\geq1$ assume additionally that there exits $\delta>0$ with $E\xi^{\kappa+\delta}<\infty$. 
			
			Then as $x\rightarrow\infty$,
			$$P(\sum_{i=1}^{\eta}\xi_{i}>x)\sim P(\eta>\frac{x}{E\xi})\sim (E\xi)^{\kappa}P(\eta>x).$$
		\end{lemma}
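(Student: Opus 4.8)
Write $\mu:=E\xi\in(0,\infty)$, $S_{n}:=\xi_{1}+\cdots+\xi_{n}$ (so $S_{0}=0$), and $S_{\eta}:=\sum_{i=1}^{\eta}\xi_{i}$. The second equivalence is immediate: since $\eta$ is regularly varying with index $\kappa$, taking the scaling factor $1/\mu$ gives $P(\eta>x/\mu)/P(\eta>x)\to(1/\mu)^{-\kappa}=\mu^{\kappa}$, so it remains to prove $P(S_{\eta}>x)\sim P(\eta>x/\mu)$. The guiding heuristic is that by the law of large numbers $S_{\eta}$ concentrates around $\mu\eta$, and since $\mu\eta$ is regularly varying while the fluctuations of $S_{\eta}$ about $\mu\eta$ are of smaller order, the tail of $S_{\eta}$ should coincide with that of $\mu\eta$. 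The plan is to make this precise through matching lower and upper bounds, sending a perturbation parameter $\varepsilon\downarrow0$ at the end.

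For the lower bound I would use that $n\mapsto S_{n}$ is nondecreasing (the $\xi_{i}$ being nonnegative) and that $S_{m}$ is independent of $\eta$. For any integer $m$, $\{S_{m}>x\}\cap\{\eta\ge m\}\subseteq\{S_{\eta}>x\}$, whence $P(S_{\eta}>x)\ge P(S_{m}>x)\,P(\eta\ge m)$. Taking $m=m_{x}:=\lceil(1+\varepsilon)x/\mu\rceil$, the weak law of large numbers forces $P(S_{m_{x}}>x)\to1$ (the mean is asymptotic to $(1+\varepsilon)x$), while regular variation gives $P(\eta\ge m_{x})/P(\eta>x/\mu)\to(1+\varepsilon)^{-\kappa}$. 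Hence $\varliminf_{x\to\infty}P(S_{\eta}>x)/P(\eta>x/\mu)\ge(1+\varepsilon)^{-\kappa}$, and $\varepsilon\downarrow0$ yields the lower bound $1$.

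For the upper bound I would split on the size of $\eta$,
$$P(S_{\eta}>x)=\sum_{n>(1-\varepsilon)x/\mu}P(\eta=n)P(S_{n}>x)+\sum_{n\le(1-\varepsilon)x/\mu}P(\eta=n)P(S_{n}>x)=:(I)+(II).$$
Bounding $P(S_{n}>x)\le1$ in $(I)$ gives $(I)\le P(\eta>(1-\varepsilon)x/\mu)\sim(1-\varepsilon)^{-\kappa}P(\eta>x/\mu)$, which already carries the correct constant. Everything then reduces to showing that the ``$\eta$ atypically small'' contribution is negligible, i.e. $(II)=o(P(\eta>x))$; granting this, $\varlimsup_{x\to\infty}P(S_{\eta}>x)/P(\eta>x/\mu)\le(1-\varepsilon)^{-\kappa}$, and $\varepsilon\downarrow0$ closes the estimate.

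The heart of the matter, and the main obstacle, is $(II)=o(P(\eta>x))$, and this is exactly where condition (iii) enters. For $n\le(1-\varepsilon)x/\mu$ one has $ES_{n}=n\mu\le(1-\varepsilon)x$, so $\{S_{n}>x\}$ is a deviation of order $x$ above the mean, which I would control by a one-big-jump decomposition at a truncation level $b=b(x)$: with $M_{n}=\max_{i\le n}\xi_{i}$, $P(S_{n}>x)\le nP(\xi>b)+P(\sum_{i\le n}\xi_{i}\mathbf 1_{\{\xi_{i}\le b\}}>x)$. For the truncated sum the summands are bounded by $b$, and a Chernoff estimate gives, uniformly in $n\le(1-\varepsilon)x/\mu$, a bound $\exp(-c(\varepsilon)\,x/b)$ with $c(\varepsilon)>0$; choosing $b=o(x)$ with $x/b\to\infty$ then makes this term decay faster than any power of $x$, hence $o(P(\eta>x))$. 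The big-jump term sums to $P(\xi>b)\,E[\eta\mathbf 1_{\{\eta\le(1-\varepsilon)x/\mu\}}]$, and for $\kappa\ge1$ condition (iii) together with Markov's inequality ($P(\xi>b)=O(b^{-\kappa-\delta})$) and $E\eta<\infty$ permit the choice $b=x^{1-\rho}$ with small $\rho<\delta/(\kappa+\delta)$, rendering this term $o(P(\eta>x))$ as well; this is precisely the point at which the extra moment of $\xi$ is consumed. The genuinely delicate case is $\kappa\in(0,1)$, where only $E\xi<\infty$ is available: there the big-jump bound forces $b\asymp x$ whereas the truncated-sum bound needs $b=o(x)$, so the single-threshold decomposition breaks down, and one must instead invoke the subexponentiality of the regularly varying $\eta$ together with the fact that $\xi$ has a strictly lighter integrable tail, as carried out in the references cited for this lemma. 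Reconciling these two competing mechanisms is the essential technical difficulty.
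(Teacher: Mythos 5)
A preliminary remark: the paper never proves this statement --- Lemma \ref{refA2} sits in Appendix A as a quoted result, with the proof delegated to the cited references --- so your proposal can only be judged on its own merits, not against an internal argument. On those merits, most of what you wrote is sound. The lower bound via $\{S_{m}>x\}\cap\{\eta\ge m\}\subseteq\{S_{\eta}>x\}$ with $m=\lceil(1+\varepsilon)x/\mu\rceil$ is correct; so is the reduction of the upper bound to $(II)=o(P(\eta>x))$; and so is your treatment of $\kappa\ge 1$: the convexity--Chernoff bound $\exp(-c(\varepsilon)x/b)$ is indeed uniform over $n\le(1-\varepsilon)x/\mu$, and with $b=x^{1-\rho}$, $0<\rho<\delta/(\kappa+\delta)$, both the truncated term and the big-jump term $P(\xi>b)\,E[\eta 1_{\{\eta\le(1-\varepsilon)x/\mu\}}]$ are $o(P(\eta>x))$. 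One repair is needed at $\kappa=1$: there $E\eta$ may be infinite (regular variation of index $1$ does not give integrability), so replace your appeal to ``$E\eta<\infty$'' by the Karamata bound, under which $E[\eta 1_{\{\eta\le y\}}]$ is slowly varying; the surplus power $x^{-[(1-\rho)(1+\delta)-1]}$ still absorbs it.

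The genuine gap is the case $\kappa\in(0,1)$, which you defer to the references with a gesture at ``subexponentiality of $\eta$''. This case cannot be waved through: the paper invokes Lemma \ref{refA2} precisely in that range (condition \eqref{conditionA2} allows any $\kappa>0$, and the lemma is used with $\kappa\in(0,1)$ in Proposition \ref{theoremA0}, in the proof of Proposition \ref{prop2}, and hence in Theorem \ref{theoremA}). Moreover, your diagnosis actually understates the difficulty: for $\kappa\in(0,1)$, \emph{no} argument that bounds $P(S_{n}>x)$ uniformly in $n\le N:=\lfloor(1-\varepsilon)x/\mu\rfloor$ and then sums out $P(\eta=n)\le 1$ can work, no matter how the truncation is organized. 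Indeed, take $P(\xi>t)\asymp t^{-1}(\log t)^{-2}$ (so $E\xi<\infty$) and $P(\eta>x)=x^{-\kappa}$; then $P(S_{N}>x)\ge 1-(1-P(\xi>x))^{N}\asymp N\,P(\xi>x)\asymp(\log x)^{-2}$, which dwarfs $P(\eta>x)=x^{-\kappa}$. So $(II)=o(P(\eta>x))$ holds only because of the weight $P(\eta=n)$: the one-big-jump contribution is controlled by Karamata's theorem, $E[\eta 1_{\{\eta\le N\}}]\asymp N P(\eta>N)\asymp x^{1-\kappa}L(x)$ (valid exactly because $\kappa<1$), times $P(\xi>\gamma x)=o(1/x)$; and the no-big-jump contribution requires a device that keeps the weight as well --- for instance the first-passage representation $P(S_{\eta}>x)=E[\bar F_{\eta}(\nu(x))]$, where $\nu(x)=\min\{m:S_{m}>x\}$ and $\bar F_{\eta}(y)=P(\eta\ge y)$, in which the factor $\bar F_{\eta}(\nu(x))$ must be retained rather than bounded by $1$. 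Subexponentiality of $\eta$, which concerns convolutions $\eta_{1}+\eta_{2}$, is not the operative mechanism. As it stands, your proposal proves the lemma only for $\kappa\ge 1$.
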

		
		\begin{lemma}[Fay et.al \cite{Fay}]\label{refA3}
			Assume that $\eta$ is a non-negative integer-valued random variable, $\{\xi_{i}\}$ is an i.i.d non-negative sequence and independent of $\eta$, such that 
			
			(i) $\xi$ is regularly varying with index $\kappa>0$;
			
			(ii) $0<E\eta<\infty$;
			
			(iii) if $\kappa\geq1$ assume additionally that there exits $\delta>0$ with  $E\eta^{\kappa+\delta}<\infty$. 
			
			Then as $x\rightarrow\infty$,
			$$P(\sum_{i=1}^{\eta}\xi_{i}>x)\sim E\eta P(\xi>x).$$
		\end{lemma}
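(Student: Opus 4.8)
The plan is to establish matching asymptotic lower and upper bounds for the ratio $P(S_\eta>x)/P(\xi>x)$, where I write $S_n=\sum_{i=1}^n\xi_i$ and $\bar F(x)=P(\xi>x)$, showing both converge to $E\eta$. The one tool I would invoke repeatedly is the closure of regularly varying laws under finite convolution: for each fixed $n$, $P(S_n>x)\sim n\bar F(x)$ as $x\to\infty$. This is precisely Lemma \ref{refA1} applied with $Y_i=\xi_i$, since there $c_i=1$ for every $i$ and the off-diagonal condition holds because $P(\xi_i>x,\xi_j>x)/\bar F(x)=\bar F(x)\to0$ for $i\neq j$.

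\textbf{Lower bound.} Since the summands are nonnegative, $P(S_\eta>x)=\sum_{n\ge0}P(\eta=n)P(S_n>x)\ge\sum_{n=0}^N P(\eta=n)P(S_n>x)$ for every fixed $N$. Dividing by $\bar F(x)$ and letting $x\to\infty$ through this finite sum (using the fixed-$n$ asymptotic termwise) gives $\varliminf_{x\to\infty}P(S_\eta>x)/\bar F(x)\ge\sum_{n=0}^N nP(\eta=n)$; letting $N\to\infty$ yields the bound $E\eta$, and only $E\eta<\infty$ is used here.

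\textbf{Upper bound.} Fix $N$ and split $P(S_\eta>x)=\sum_{n\le N}+\sum_{n>N}$. The head $\sum_{n\le N}P(\eta=n)P(S_n>x)/\bar F(x)$ tends to $\sum_{n\le N}nP(\eta=n)\le E\eta$ by the same fixed-$n$ asymptotic, so the whole problem reduces to showing the tail $\sum_{n>N}P(\eta=n)P(S_n>x)/\bar F(x)$ can be made small, uniformly in $x\ge x_0$ for some threshold $x_0$ not depending on $n$, by taking $N$ large. I expect this uniform tail estimate to be the main obstacle: I need a bound on $P(S_n>x)$ that grows in $n$ slowly enough to be summable against $P(\eta=n)$ under exactly the hypothesis $E\eta^{\kappa+\delta}<\infty$ (and under $E\eta<\infty$ alone when $\kappa<1$).

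\textbf{The crux: a uniform estimate for $P(S_n>x)$.} Set $\tilde\kappa=\kappa+\delta/2$ when $\kappa\ge1$ and $\tilde\kappa=1$ when $\kappa<1$, so that in all cases $\tilde\kappa>\kappa$, $\tilde\kappa\ge1$, and $E\eta^{\tilde\kappa}<\infty$. Truncating at level $x$, I would use the inclusion $\{S_n>x\}\subseteq\{\max_{i\le n}\xi_i>x\}\cup\{\sum_{i\le n}\xi_i\mathbf{1}\{\xi_i\le x\}>x\}$. The first event contributes at most $n\bar F(x)$ by a union bound. For the second, Markov's inequality at power $\tilde\kappa$ combined with Minkowski's inequality (legitimate since $\tilde\kappa\ge1$) gives
$$P\Big(\sum_{i\le n}\xi_i\mathbf{1}\{\xi_i\le x\}>x\Big)\le x^{-\tilde\kappa}\,n^{\tilde\kappa}\,E\big(\xi^{\tilde\kappa}\mathbf{1}\{\xi\le x\}\big),$$
and Karamata's theorem for truncated moments (Lemma \ref{Karamata}) gives $x^{-\tilde\kappa}E(\xi^{\tilde\kappa}\mathbf{1}\{\xi\le x\})\sim\frac{\kappa}{\tilde\kappa-\kappa}\bar F(x)$. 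Since this last asymptotic is purely in $x$, there is an $x_0$ independent of $n$ with $P(S_n>x)\le Cn^{\tilde\kappa}\bar F(x)$ for all $n\ge1$ and $x\ge x_0$. Hence $\sum_{n>N}P(\eta=n)P(S_n>x)/\bar F(x)\le C\sum_{n>N}n^{\tilde\kappa}P(\eta=n)$, a tail of the convergent series $E\eta^{\tilde\kappa}$, which vanishes as $N\to\infty$. Combining the head and tail estimates gives $\varlimsup_{x\to\infty}P(S_\eta>x)/\bar F(x)\le E\eta$, matching the lower bound and completing the proof. The delicate point throughout is the exponent bookkeeping: the $n^{\tilde\kappa}$ growth coming out of Minkowski is crude but is exactly summable under the stated moment of $\eta$, and the choice $\tilde\kappa=1$ in the regime $\kappa<1$ is what lets the weaker hypothesis $E\eta<\infty$ suffice there.
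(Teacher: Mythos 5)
Your proposal is correct, but there is nothing in the paper to compare it against: Lemma \ref{refA3} is stated in Appendix~A as a quoted result from Fay et al.\ \cite{Fay}, and the paper gives no proof of it. So your argument stands as a self-contained derivation of a cited fact, and it is the classical one for randomly stopped sums. It is worth noting that it is also precisely the toolkit the authors deploy in their own proof of Lemma \ref{lemmaA}: split $P(\sum_{i=1}^{\eta}\xi_i>x)=\sum_{n}P(\eta=n)P(S_n>x)$ into a head, handled termwise by closure of regular variation under finite convolution (your appeal to Lemma \ref{refA1} with $c_i=1$ is legitimate, since independence gives $P(\xi_i>x,\xi_j>x)=P(\xi>x)^2=o(P(\xi>x))$), and a tail, handled by a uniform-in-$n$ bound $P(S_n>x)\le Cn^{\tilde\kappa}P(\xi>x)$ for all $x\ge x_0$, obtained from the truncation $\{S_n>x\}\subseteq\{\max_{i\le n}\xi_i>x\}\cup\{\sum_{i\le n}\xi_i 1_{\{\xi_i\le x\}}>x\}$, Markov's inequality at power $\tilde\kappa$, Minkowski's inequality, and Karamata's theorem for truncated moments (Lemma \ref{Karamata}). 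The exponent bookkeeping is indeed the delicate point and you got it right: taking $\tilde\kappa=\kappa+\delta/2\in(\kappa,\kappa+\delta)$ when $\kappa\ge1$ makes $n^{\tilde\kappa}$ summable against $P(\eta=n)$ exactly under hypothesis (iii), while $\tilde\kappa=1>\kappa$ when $\kappa<1$ needs only $E\eta<\infty$; in both regimes $\tilde\kappa\ge1$ legitimizes Minkowski, $\tilde\kappa>\kappa$ legitimizes Karamata, and the union-bound term $nP(\xi>x)$ is absorbed into $Cn^{\tilde\kappa}P(\xi>x)$ because $n\le n^{\tilde\kappa}$. Since the Karamata asymptotic involves $x$ only, the threshold $x_0$ is independent of $n$, which is what makes the tail estimate uniform and the interchange of limits ($x\to\infty$ then $N\to\infty$) valid.
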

		
		\begin{lemma}[Denisov et.al\cite{Denisov}, Fay et.al \cite{Fay}]\label{refA4}
			Assume that $\eta$ is a non-negative integer-valued random variable, $\{\xi_{i}\}$ is an i.i.d non-negative sequence and independent of $\eta$, such that 
			
			(i) $\xi$ is regularly varying with index $\kappa\geq1$;
			
			(ii) $0<E\xi<\infty$, $0<E\eta<\infty$;
			
			(iii) there exists positive constant $c$ that $P(\eta>x)\sim cP(\xi>x)$.
			
			Then as $x\rightarrow\infty$,
			$$P(\sum_{i=1}^{\eta}\xi_{i}>x)\sim E\eta P(\xi>x)+ P(\eta>\frac{x}{E\xi})\sim(E\eta+c(E\xi)^{\kappa})P(\xi>x).$$
		\end{lemma}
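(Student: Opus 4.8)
The plan is to prove the statement through the single-big-jump principle, which isolates two asymptotically disjoint mechanisms that make $S_{\eta}=\sum_{i=1}^{\eta}\xi_{i}$ large: either $\eta$ stays of typical (bounded) order while one summand $\xi_{i}$ is of order $x$, contributing $E\eta\,P(\xi>x)$; or $\eta$ is itself of order $x/E\xi$, so that by the law of large numbers $S_{\eta}\approx\eta\,E\xi$ exceeds $x$, contributing $P(\eta>x/E\xi)$. First I would record that, since $P(\eta>x)\sim cP(\xi>x)$ and $\xi$ is regularly varying with index $\kappa$, the variable $\eta$ is regularly varying with the same index; hence $P(\eta>x/E\xi)\sim(E\xi)^{\kappa}P(\eta>x)\sim c(E\xi)^{\kappa}P(\xi>x)$. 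This makes the two displayed forms of the asymptotic equivalent, so it suffices to prove $P(S_{\eta}>x)\sim E\eta\,P(\xi>x)+P(\eta>x/E\xi)$ and then read off the constant $E\eta+c(E\xi)^{\kappa}$.

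For the lower bound I would realize the two mechanisms on disjoint events. Fix large $M$ and small $\delta>0$. On $\{\eta\le M\}$, conditioning on $\eta=n$ and using $P(\bigcup_{i\le n}\{\xi_{i}>x\})\ge nP(\xi>x)-\binom{n}{2}P(\xi>x)^{2}\sim nP(\xi>x)$ yields a contribution that increases to $E\eta\,P(\xi>x)$ as $M\to\infty$. On the disjoint event $\{\eta>(1+\delta)x/E\xi\}$, the weak law of large numbers (valid since $E\xi<\infty$, even for $\kappa\in(1,2)$) gives $P(S_{n}\le x\mid\eta=n)\to0$ uniformly over such $n$, so this event contributes at least $P(\eta>(1+\delta)x/E\xi)\sim c\,(E\xi/(1+\delta))^{\kappa}P(\xi>x)$. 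Adding the two and letting $M\to\infty$ and then $\delta\to0$ delivers the lower bound.

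For the upper bound I would split the range of $\eta$ at the level $(1-\delta)x/E\xi$. The large part is immediate: $P(S_{\eta}>x,\eta>(1-\delta)x/E\xi)\le P(\eta>(1-\delta)x/E\xi)\sim c\,(E\xi/(1-\delta))^{\kappa}P(\xi>x)$, which tends to the correct term as $\delta\to0$. I would decompose the moderate part $\{\eta\le(1-\delta)x/E\xi\}$ into a block $\{\eta\le M\}$, where $P(S_{n}>x)\sim nP(\xi>x)$ for each fixed $n$ and summation produces $E\eta\,P(\xi>x)$ up to a tail of $E\eta$ that vanishes as $M\to\infty$, and an intermediate block $\{M<\eta\le(1-\delta)x/E\xi\}$. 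On the intermediate block the mean $nE\xi$ sits at distance at least $\delta x$ below $x$, so exceeding $x$ is a genuine large deviation; applying the precise large-deviation upper bound of Theorem~\ref{refB} to $S_{n}-nE\xi$ with threshold $x-nE\xi$ gives $P(S_{n}>x)\le nP(\xi>x-nE\xi)(1+o(1))$ uniformly, and a crude estimate $\sum_{n}P(\eta=n)\,n\,P(\xi>x-nE\xi)=O\big(x\,P(\xi>x)^{2}\big)=o(P(\xi>x))$, obtained by bounding $n\le x/E\xi$ and $P(\xi>x-nE\xi)\le P(\xi>\delta x)$ via Potter's bound (Lemma~\ref{Potter}), shows this block is negligible for $\kappa>1$.

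The main obstacle is exactly this intermediate block: one must check that the large-deviation threshold $x-nE\xi$ remains in the admissible zone $x\ge x_{n}$ of Theorem~\ref{refB} as $n$ ranges up to $(1-\delta)x/E\xi$, and then perform the uniform summation against $P(\eta=n)$. For $\kappa>1$ this succeeds because $x-nE\xi\asymp x\asymp n$ dominates $x_{n}=n^{1/\kappa+\delta'}$ (or $\sqrt{an\log n}$ when $\kappa>2$), the relative error $x^{1-\kappa}$ tending to $0$; the boundary value $\kappa=1$ is delicate, since there $x_{n}\asymp n\asymp x$ and the threshold condition degenerates, so it would have to be treated by a direct single-big-jump (Fuk–Nagaev type) truncation rather than by Theorem~\ref{refB}. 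Once the intermediate block is shown negligible and the three pieces are assembled, the upper bound matches the lower bound and the proof is complete.
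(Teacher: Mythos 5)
First, note that the paper itself does not prove Lemma \ref{refA4}: it is an appendix lemma quoted from Denisov et al.\ and Fay et al., so your attempt can only be judged on its own merits. Your single-big-jump scheme (lower bound via two disjoint events, upper bound via splitting the range of $\eta$) is the standard route, and your lower bound as well as the blocks $\{\eta\le M\}$ and $\{\eta>(1-\delta)x/E\xi\}$ of the upper bound are sound. The gap is in the intermediate block $\{M<\eta\le(1-\delta)x/E\xi\}$. From the bounds you actually state ($n\le x/E\xi$, and $P(\xi>x-nE\xi)\le P(\xi>\delta x)\le C_\delta P(\xi>x)$ by Potter), what follows is
\begin{equation*}
\sum_{M<n\le(1-\delta)x/E\xi}P(\eta=n)\,n\,P(\xi>x-nE\xi)
\;\le\;\frac{x}{E\xi}\,C_\delta P(\xi>x)\,P(\eta>M),
\end{equation*}
which is $O\bigl(xP(\xi>x)\bigr)$, not $O\bigl(xP(\xi>x)^{2}\bigr)$; relative to $P(\xi>x)$ this diverges linearly in $x$. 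The extra factor $P(\xi>x)$ you claim would require $\sum_{n\in\mathrm{block}}P(\eta=n)=O(P(\xi>x))$, which holds only if the block starts at a level proportional to $x$, not at a fixed $M$. Worse, the conclusion itself is false for fixed $M$: the single term $n=M+1$ already contributes $P(\eta=M+1)(M+1)P(\xi>x-(M+1)E\xi)\sim P(\eta=M+1)(M+1)P(\xi>x)$, a nonvanishing multiple of $P(\xi>x)$, so the block is not $o(P(\xi>x))$; it can only become negligible in the iterated limit $M\to\infty$. The repair is simple: do not discard the weight $n$ against $P(\eta=n)$. Keeping it, the block is at most
\begin{equation*}
P(\xi>\delta x)\sum_{n>M}nP(\eta=n)\;\le\;C_\delta\,P(\xi>x)\,E\bigl[\eta 1_{\{\eta>M\}}\bigr],
\end{equation*}
and $E[\eta 1_{\{\eta>M\}}]\to0$ as $M\to\infty$ since $E\eta<\infty$; combined with the uniform bound $P(S_n>x)\le(1+o(1))\,nP(\xi>x-nE\xi)$ this closes the upper bound.

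The second gap is the case $\kappa=1$, which the lemma explicitly includes. Your uniform upper bound on the intermediate block comes from Theorem \ref{refB}, which requires the threshold $x-nE\xi\asymp x$ to exceed $x_n$; for $\kappa=1$ one has $x_n=n^{1+\delta'}\gg x$ when $n\asymp x$, so the theorem is inapplicable exactly where it is needed. You acknowledge this but only gesture at a ``Fuk--Nagaev type truncation'' without carrying it out, so as written your proof establishes the lemma only for $\kappa>1$. Completing $\kappa=1$ requires an actual truncation argument (this is in effect what the cited reference of Denisov, Foss and Korshunov does), and it is worth noting that there $E\xi<\infty$ forces the slowly varying factor $L(x)=xP(\xi>x)$ to vanish at infinity, which is the fact that makes such an argument go through.
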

		
		\begin{lemma}[Asmussen and Foss \cite{Asmussen}]\label{refA5}
			Consider equation
			$$T\overset{\text{d}}{=}Q+\sum_{i=1}^{N}T_{m},$$
			where $Q$ and $N$ are (possibly dependent) nonnegative, nondegenerate r.v.'s, $N$ is integer valued, $\{T_{n}\}_{n}$ is i.i.d and distributed as $T$.
			
			If 
			$$EN<1, \quad EQ<\infty,$$
			then there is only one nonnegative solution $T$ with finite mean, and $ET=\frac{EQ}{1-EN}$.
			
			If further, for $\forall \varepsilon$ small, c $\in (ET-\varepsilon, ET+\varepsilon)$, the distribution of $Q+cN$ is intermediate regularly varying, then as $x\rightarrow\infty$,
			$$P(T>x)\sim\frac{1}{1-EN}P(Q+ET\cdot N>x).$$   
			
		\end{lemma}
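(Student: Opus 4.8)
The plan is to prove the two assertions separately, representing $T$ concretely as the total weight of a subcritical weighted branching tree. Attach to every node $v$ an independent copy $(Q_v,N_v)$ of $(Q,N)$ and let $v$ have $N_v$ children; since $EN<1$ the resulting Galton--Watson tree $\mathcal T$ is a.s.\ finite with $E|\mathcal T|=\sum_{k\ge0}(EN)^k=(1-EN)^{-1}<\infty$. Setting $T:=\sum_{v\in\mathcal T}Q_v$ produces a nonnegative random variable solving $T\overset{\text{d}}{=}Q+\sum_{i=1}^{N}T_i$, because the $N$ subtrees hanging off the root are i.i.d.\ copies of the whole tree and independent of the root datum $(Q,N)$. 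Taking expectations in the distributional equation and using that the fresh copies $\{T_i\}$ are independent of $N$ gives $ET=EQ+EN\cdot ET$, hence $ET=EQ/(1-EN)<\infty$. For uniqueness among finite-mean nonnegative solutions, iterate the equation $n$ times to write any solution as the weight of the first $n$ generations plus a remainder $\sum_{|v|=n}T_v$ of mean $(EN)^nET\to0$; the remainder vanishes in probability, so the law of $T$ is pinned down by the solution-independent finite-generation weights.

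For the tail the guiding principle is the single big jump: for large $x$ the event $\{T>x\}$ is dominated by the event that at exactly one node $v^\ast$ the local innovation is large, where the relevant large quantity at a node is $Q_{v^\ast}$ together with the typical total weight $ET\cdot N_{v^\ast}$ of the subtrees rooted at its children, i.e.\ the quantity $Q+ET\cdot N$. Since there are on average $(1-EN)^{-1}$ nodes, summing the single-node contributions produces the asserted constant. Concretely, I would establish the self-consistent relation
$$P(T>x)\sim P(Q+ET\cdot N>x)+EN\cdot P(T>x),\qquad x\to\infty,$$
and solve it to get $P(T>x)\sim(1-EN)^{-1}P(Q+ET\cdot N>x)$; here the first term records a large root innovation (large $Q$ or large $N$, each subtree replaced by its mean $ET$) and the term $EN\cdot P(T>x)$ records the jump occurring inside one of the on-average $EN$ subtrees. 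The telescoping $P(T>x)\sim\sum_{k\ge0}(EN)^kP(Q+ET\cdot N>x)$ matches the heuristic that a big jump at generation $k$ is carried by one of $(EN)^k$ expected nodes.

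To make the relation rigorous I would condition on $(Q,N)$ and apply the single-big-jump decomposition to $Q+\sum_{i=1}^{N}T_i$. For the lower bound, isolate the disjoint events ``root innovation large'' and ``exactly one subtree large with the rest typical'', obtaining $P(T>x)\ge(1-o(1))\big[P(Q+ET\cdot N>x)+EN\,P(T>x)\big]$. For the upper bound, discard the negligible event that two or more summands are simultaneously moderately large (controlled by subexponentiality, which the intermediate regular variation hypothesis supplies) and use the finite-mean bound $E\big(\sum_{|v|=n}T_v\big)=(EN)^nET\to0$ on the deep generations, letting the truncation depth $n\to\infty$ after $x\to\infty$. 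The intermediate regular variation of $Q+cN$, uniformly for $c$ near $ET$, is precisely what furnishes the insensitivity to bounded random shifts and scalings, e.g.\ $P(T>x-c)\sim P(T>x)$ and $P\big(Q+ET\cdot N>(1\mp\delta)x\big)\sim P(Q+ET\cdot N>x)$ as $\delta\to0$, which lets the lower and upper bounds be matched.

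The main obstacle is the self-referential nature of the tail relation: $T$ appears on both sides and shares its a priori unknown tail with the summands $T_i$, so one cannot argue against a fixed reference tail. The resolution is to work entirely within the subexponential/intermediate-regularly-varying class, which is stable under the operations involved and for which the single-big-jump principle holds with uniform control, and to tame the contribution of deep generations through the finite mean $ET<\infty$. Verifying that the assumed intermediate regular variation propagates from $Q+cN$ to $T$ and yields the uniform shift-insensitivity above is the delicate technical point on which the whole argument rests.
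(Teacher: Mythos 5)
The paper itself offers no proof of this lemma: it is imported verbatim from Asmussen and Foss \cite{Asmussen} as a ready-made tool, so there is no internal argument to compare against and your proposal has to stand on its own. Its first half does: the construction of a solution as $\sum_{v\in\mathcal T}Q_v$ over a subcritical weighted tree, the mean identity $ET=EQ/(1-EN)$, and uniqueness via iterating the equation are all sound.

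The gap is in the tail analysis, and it sits precisely at the two points you flag but do not resolve. First, your upper bound fails quantitatively as stated: controlling the deep-generation remainder $R_n=\sum_{|v|=n}T_v$ through its mean gives, by Markov's inequality, $P(R_n>\delta x)\le (EN)^n ET/(\delta x)$, and this is \emph{not} $o\bigl(P(Q+ET\cdot N>x)\bigr)$ once the tail index $\kappa$ of $Q+ET\cdot N$ exceeds $1$: the ratio grows like $x^{\kappa-1}/\ell(x)\to\infty$, so the iterated limit you prescribe (first $x\to\infty$, then truncation depth $n\to\infty$) diverges instead of vanishing. Since the application in this paper has $\kappa>1$, this is fatal rather than cosmetic; one must bound $R_n$ by a \emph{tail} of $T$, not its mean, which is circular at this stage of the argument. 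Second, the self-consistent relation $P(T>x)\sim P(Q+ET\cdot N>x)+EN\,P(T>x)$ can only be ``solved'' if one already knows that $T$ is long-tailed/subexponential and that $P(T>x)$ is comparable to $P(Q+ET\cdot N>x)$ in both directions — which is essentially the conclusion. The standard resolution, and the route taken in the cited source, is a monotone iteration $T^{(0)}=0$, $T^{(n+1)}\overset{\text{d}}{=}Q+\sum_{i=1}^{N}T^{(n)}_i$, proving by induction that each $T^{(n)}$ is intermediate regularly varying with explicit constant — this is exactly where the hypothesis that $Q+cN$ is IRV for \emph{all} $c$ in a neighborhood of $ET$ is used, since the induction needs $c=ET^{(n)}\uparrow ET$, a hypothesis your outline never invokes — combined with a Kesten-type bound, uniform in $n$, of the form $P(T^{(n)}>x)\le C(\varepsilon)(1+\varepsilon)^n P(Q+ET\cdot N>x)$, which licenses the interchange of the limits $n\to\infty$ and $x\to\infty$. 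Without these two ingredients, your proposal is a correct heuristic for the one-big-jump mechanism, not a proof.
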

		
		\section{Large deviations for sums of i.i.d. sequences}
		\begin{theorem}[A.V. Nagaev\cite{AV}, S.V. Nagaev\cite{SV}, Cline and Hsing \cite{Cline}]\label{refB}
			Suppose ${X_{n}}$ is an i.i.d sequence which is regularly varying with index $\kappa>0$ and $p+q=1$ such that
			$$P(X>x)\sim p\frac{L(x)}{x^{\kappa}}, \quad P(X\leq -x)\sim q\frac{L(x)}{x^{\kappa}}, \qquad as \quad x\rightarrow\infty.$$
			
			Then the following relations hold for suitable sequences $\{x_{n}\}\uparrow\infty$ :
			$$\lim_{n\rightarrow\infty}\sup_{x\geq x_{n}}\left|\frac{P(\sum_{i=1}^{n}X_{i}-d_{n}>x)}{nP(|X|>x)}-p\right|=0,$$
			$$\lim_{n\rightarrow\infty}\sup_{x\geq x_{n}}\left|\frac{P(\sum_{i=1}^{n}X_{i}-d_{n}\leq-x)}{nP(|X|>x)}-q\right|=0,$$
			where
			\begin{equation*}
				d_{n}=\left\{
				\begin{aligned}
					&0, \qquad\qquad\quad \kappa\in(0,1]\\
					&E(\sum_{i=1}^{n}X_{i}), \quad \kappa\in(1,\infty)
				\end{aligned}
				\right.
			\end{equation*}
			and if $\kappa\in(0,2]$, one can choose $x_{n}=n^{\delta+1/\kappa}$ for any $\delta>0$; if $\kappa\in(2,\infty)$, one can choose $x_{n}=\sqrt{an\log n}$ for $a>\kappa-2$.
		\end{theorem}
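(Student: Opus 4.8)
This is a classical result, so I would outline the standard route, which combines the ``one big jump'' principle with a Fuk--Nagaev type truncation inequality. The plan is to prove only the right-tail assertion, since the left-tail statement follows by applying the same argument to $-X_{1},\ldots,-X_{n}$, which merely interchanges the roles of $p$ and $q$. Write $M_{n}=\max_{1\le i\le n}X_{i}$; by the assumed tail balance $P(X>x)\sim p\,P(|X|>x)$, and throughout one works in the regime $x\ge x_{n}$, where $nP(|X|>x)\to 0$.

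First I would establish the lower bound, exploiting that $S_{n}-d_{n}$ is large precisely when a single summand is large. By Bonferroni's inequality, for fixed $\varepsilon>0$,
$$
P(S_{n}-d_{n}>x)\ \ge\ \sum_{i=1}^{n}P\big(X_{i}>(1+\varepsilon)x,\ S_{n}-X_{i}-d_{n}>-\varepsilon x\big)\ -\ \binom{n}{2}P(X>x)^{2}.
$$
In the regime $x\ge x_{n}$ the subtracted double-counting term is $o\big(nP(|X|>x)\big)$, while the stable- or Gaussian-scaling of $S_{n-1}$ (after the centering absorbed into $d_{n}$) makes each event $\{S_{n}-X_{i}-d_{n}>-\varepsilon x\}$ have probability tending to $1$, uniformly in the range considered. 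Regular variation then gives $P(X>(1+\varepsilon)x)\sim(1+\varepsilon)^{-\kappa}p\,P(|X|>x)$, and letting $\varepsilon\downarrow 0$ yields the lower bound $p$.

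For the upper bound I would split according to the largest summand,
$$
P(S_{n}-d_{n}>x)\ \le\ P\big(M_{n}>(1-\varepsilon)x\big)\ +\ P\big(S_{n}-d_{n}>x,\ M_{n}\le(1-\varepsilon)x\big),
$$
where the first term is $\sim nP(X>(1-\varepsilon)x)\sim(1-\varepsilon)^{-\kappa}p\,nP(|X|>x)$ in the large-deviation regime and again gives $p$ after $\varepsilon\downarrow 0$. The main obstacle is the second term, in which every summand has been truncated at level $(1-\varepsilon)x$: here one applies a Fuk--Nagaev exponential inequality to the centered truncated sum, producing a Gaussian-type bound $\exp(-c\,x^{2}/(nb_{n}))$ with $b_{n}$ the truncated second moment, and this term is exactly what dictates the admissible thresholds. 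Comparing it against $nP(|X|>x)\asymp nL(x)x^{-\kappa}$: for $\kappa\in(0,2]$ one has $b_{n}\asymp x^{2-\kappa}L(x)$ by Karamata's theorem, so the exponent is of order $x^{\kappa}/(nL(x))$, and making the bound negligible relative to the polynomial tail needs $x\gg n^{1/\kappa}$, which forces $x_{n}=n^{\delta+1/\kappa}$; for $\kappa>2$ the variance is finite, the exponent is $\sim x^{2}/(2n\sigma^{2})$, and balancing it against the tail forces $x^{2}\gtrsim(\kappa-2)n\log n$, which forces $x_{n}=\sqrt{an\log n}$ with $a>\kappa-2$. Finally I would combine the two bounds, send $\varepsilon\downarrow 0$, and invoke Potter's bound (Lemma \ref{Potter}) to promote the pointwise asymptotics to convergence uniform over $x\ge x_{n}$; the left tail then follows by the symmetry noted at the outset.
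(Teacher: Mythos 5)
The paper itself contains no proof of Theorem \ref{refB}: it is quoted in Appendix B as a known result of A.V.~Nagaev, S.V.~Nagaev and Cline--Hsing, and is only invoked as a black box (in the proof of Proposition \ref{prop1}, applied to the i.i.d.\ summands $Y_{i}^{(\infty)}$ with $p=1$, $q=0$). So your proposal can only be measured against the classical proofs in the cited references, and at the level of architecture it does match them: a one-big-jump lower bound via Bonferroni, an upper bound by splitting on the maximum, truncation plus a Fuk--Nagaev inequality, and Potter-type bounds to get uniformity. The lower-bound half, the treatment of the double-counting term, and the identification of the constant $p$ are all fine.

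The gap is in the upper bound, in the precise form of the Fuk--Nagaev estimate you claim. With all summands truncated at $y=(1-\varepsilon)x$, the Bernstein-type bound for the centered truncated sum is $\exp\bigl(-c\,x^{2}/(nb_{y}+xy)\bigr)$, not $\exp\bigl(-c\,x^{2}/(nb_{y})\bigr)$; in the large-deviation regime $xy\asymp x^{2}\gg nb_{y}$, so that bound degenerates to a constant and proves nothing. What a single truncation at level $\asymp x$ actually yields is the power form $\bigl(e\,nb_{y}/(xy)\bigr)^{x/y}$, which by Karamata is of order $\bigl(c\,nP(|X|>x)\bigr)^{1/(1-\varepsilon)}$; since $nP(|X|>x)\to 0$ uniformly for $x\geq x_{n}$, this is $o\bigl(nP(|X|>x)\bigr)$ and rescues your argument for $\kappa\in(0,2]$. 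For $\kappa>2$, however, it does not: at the critical scale $x\asymp\sqrt{an\log n}$ one has $nb_{y}\asymp n\sigma^{2}$ while $xy\asymp x^{2}\asymp n\log n$, so the power bound decays only like a power of $1/\log n$, whereas $nP(|X|>x)\asymp n^{1-\kappa/2+o(1)}$ decays polynomially. The Gaussian exponent $x^{2}/(2n\sigma^{2})$ that you balance against the tail --- which is indeed the true source of the threshold $a>\kappa-2$, with the variance normalized --- cannot be reached by truncating at $(1-\varepsilon)x$. The classical proofs truncate at a much lower level (for instance $y\asymp x/\log n$) and separately control configurations containing one or more jumps of intermediate size between $y$ and $(1-\varepsilon)x$, these being negligible precisely because $\kappa>2$. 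That second truncation layer is a genuinely missing idea: as written, the $\kappa>2$ case of your upper bound does not close.
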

	\end{appendices}

\end{document}